\newtheorem{thm}          {Theorem}      [section]
\newtheorem{prop}   [thm] {Proposition}
\newtheorem{lemma}  [thm] {Lemma}
\newtheorem{cor}    [thm] {Corollary}
\newtheorem{theorem}       [thm] {Theorem}
\theoremstyle{remark}
\def\CC{{\mathbb C}}
\def\fu{{\mathfrak{u}}}
\def\typea{{\mathfrak{sl}}}
\def\skipa{\vspace{-1.5mm} & \vspace{-1.5mm} & \vspace{-1.5mm}\\ }
\DeclareMathOperator\Sym{Sym}
\DeclareMathOperator\nsd{\circ}
\DeclareMathOperator\rank{rank}
\DeclareMathOperator\htt{ht}
\renewcommand\b\bar
\newcommand\bra[2]{{\langle#1,#2\rangle}}
\newcommand\paren[2]{{\left(#1,#2\right)}}
\newcommand\g{\mathfrak{g}}
\begin{document}

\title[A bound for highest weight modules] 
{A lower bound for the dimension of a highest weight module}
\author{Daniel Goldstein}
\address{Center for Communications Research,
San Diego, CA 92121, USA}
\email{danielgolds@gmail.com}
\author{Robert Guralnick}
\address{Department of Mathematics, University of Southern
California, Los Angeles, CA 90089-2532, USA}
\email{guralnic@usc.edu}
\author{Richard Stong}
\address{Center for Communications Research,
San Diego, CA 92121, USA}
\email{stong@ccrwest.org}

\thanks{We thank Nick Katz for asking for the question about representations with
dimension a product of two primes}.

\subjclass[2000]{Primary 17B10, 22E46} 

\keywords{complex Lie algebras, representation, highest
weight}

\date{\today}

\begin{abstract}
For each integer $t>0$ and each complex simple Lie algebra $\g$,
we determine the least dimension of 
an irreducible highest weight representation of $\g$
whose highest weight has height $t$.
As a corollary, we classify all irreducible modules
whose dimension equals a product of two primes. 
\end{abstract}

\maketitle

\section{Introduction} \label{sec:intro}

Let $\g$ be a simple complex Lie algebra
of rank $n$. The irreducible finite-dimensional $\g$-modules
are the highest weight modules $V(\lambda)$ where 
$\lambda$ is a dominant weight.
Each $\lambda$ is uniquely a linear combination 
of $\lambda = \sum_{1\le i\le n}  a_i \lambda_i$ 
of the fundamental dominant weights $\lambda_i, 1 \le i \le n$, where
the $a_i(1\le i\le n)$ are non-negative integers. 
We define the {\it height} of $\lambda$, denoted 
$\htt(\lambda)$, to be $\htt(\lambda) = \sum a_i$. 


Our first main result determines, among all dominant weights 
$\lambda$ of a given height, the least value of $d=\dim V(\lambda)$ 
and gives a $\lambda$ achieving this minimum.

\begin{thm}  \label{main1} Let $\g$ be a 
simple complex Lie algebra of rank $n.$
There exists $1\le s\le n$ (depending on $\g$) such that 
$\dim V(\lambda) \ge \dim V(\htt(\lambda) \lambda_s)$
for all dominant weights $\lambda$.

Equality holds if and only if $\lambda = \htt(\lambda)\lambda_{s'}$ 
and there is an automorphism of the Dynkin digraph of $\g$ taking
$s$ to $s'$.
\end{thm}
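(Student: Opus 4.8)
The plan is to use the Weyl dimension formula to express $\dim V(\lambda)$ as a product of linear forms and then exploit concavity. Writing $\rho = \sum_i \lambda_i$ and $b_i = a_i + 1$, the Weyl formula gives
$$\dim V(\lambda) = \prod_{\alpha > 0} \frac{\langle \lambda + \rho, \alpha^\vee\rangle}{\langle \rho, \alpha^\vee\rangle} = \prod_{\alpha > 0} \frac{\sum_i b_i\, n_i(\alpha)}{\sum_i n_i(\alpha)},$$
where $\alpha$ runs over positive roots, $\alpha^\vee$ is the coroot, and $n_i(\alpha) = \langle \lambda_i, \alpha^\vee\rangle \ge 0$ is the coefficient of $\alpha_i^\vee$ in $\alpha^\vee$. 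Each factor is a positive affine function of $\lambda$, so each $\log\bigl(\sum_i b_i n_i(\alpha)\bigr)$ is concave, and hence $\log\dim V(\lambda)$ is concave on the hyperplane $\{\sum_i a_i = t\}$. Since the simple roots occur among the positive roots, the vectors $(n_i(\alpha))_i$ span, so this concavity is in fact strict.

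First I would fix the height $t = \htt(\lambda)$ and minimize over the simplex $\Delta_t = \{\lambda = \sum_i a_i\lambda_i : a_i \ge 0,\ \sum_i a_i = t\}$, whose vertices are exactly the weights $t\lambda_s$, $1 \le s \le n$. A strictly concave function on a polytope attains its minimum only at vertices; this immediately shows $\dim V(\lambda) \ge \min_s \dim V(t\lambda_s)$ and, moreover, that any $\lambda$ achieving equality must itself be a vertex $t\lambda_{s'}$. This reduces both the inequality and the equality statement to comparing the one-parameter families $P_s(t) := \dim V(t\lambda_s) = \prod_{\alpha>0}\bigl(1 + t\, n_s(\alpha)/h(\alpha)\bigr)$, where $h(\alpha) = \sum_i n_i(\alpha)$.

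The heart of the argument is to produce a single node $s$ (depending only on $\g$) with $P_s(t) \le P_{s'}(t)$ for every $s'$ and every $t \ge 0$. Writing $P_s(t) = \sum_k e_k\bigl(\{n_s(\alpha)/h(\alpha)\}_\alpha\bigr)\,t^k$ in terms of elementary symmetric functions, it suffices to establish the coefficientwise domination $e_k(\{n_s(\alpha)/h(\alpha)\}) \le e_k(\{n_{s'}(\alpha)/h(\alpha)\})$ for all $k$; I would try to prove this by exhibiting, for each $s'$, an injection of the positive roots supported at $\alpha_s$ into those supported at $\alpha_{s'}$ that increases the ratio $n_\bullet(\alpha)/h(\alpha)$ factor by factor. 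I expect this comparison to be the main obstacle, since the set of roots supported at a given node varies with the Dynkin type; realistically it will require going through the classification, using the explicit root systems to read off the multisets $\{n_s(\alpha)/h(\alpha)\}$ and to pin down $s$ as an extremal node of the digraph.

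Finally, the diagram-automorphism clause is forced on us and also closes the equality analysis. If $\sigma$ is an automorphism of the Dynkin digraph then it permutes the positive roots and their coroots, so $P_{\sigma(s)}(t) = P_s(t)$ identically; hence every $s'$ in the orbit of $s$ gives equality, establishing the ``if'' direction. For the converse I would show that the coefficientwise domination above is strict in at least one degree whenever $s'$ lies outside the orbit of $s$ --- equivalently, that the multiset $\{n_s(\alpha)/h(\alpha)\}$ determines the node up to digraph symmetry --- so that $P_{s'}(t) > P_s(t)$ for all $t \ge 1$. Combined with the strict-concavity step, which already forces an extremal $\lambda$ to be a vertex, this yields the stated equality condition exactly.
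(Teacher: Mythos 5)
Your outline follows the same two-step architecture as the paper's own proof: (i) log-concavity of the Weyl numerator on the simplex of fixed height, forcing the minimum (and any minimizer) to the vertices $t\lambda_j$ --- this is exactly Lemma~\ref{lem1}, and your strictness observation, which the paper leaves implicit, correctly pins equality to vertices; and (ii) the cross-node comparison $\dim V(t\lambda_j)\ge\dim V(t\lambda_s)$, which is Lemma~\ref{lem2}. Step (i) is fine. The genuine gap is step (ii): you defer it as ``the main obstacle,'' but it is not a routine verification --- it is the bulk of the theorem. The paper's Section~\ref{sec:min} constructs, type by type for $A_n$, $B_n$, $C_n$, $D_n$, explicit injections $\phi_j\colon R_s\to R_j$ with factor-domination properties, and falls back on a computer search for the exceptional types; moreover the strictness needed for your ``only if'' clause (that $P_{s'}(t)>P_s(t)$ whenever $s'$ is outside the $\Gamma$-orbit of $s$) lives entirely inside this unproved case analysis.

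Worse, the specific mechanism you propose --- an injection increasing the ratio $n_\bullet(\alpha)/h(\alpha)$ factor by factor --- provably does not exist in at least one case, namely $B_n$ with target node $j=n$, which is exactly where the paper says its own injection strategy breaks down. Concretely, take $B_3$ with $s=1$: the multiset of ratios $n_1(\alpha)/h(\alpha)$ over $R_1$ is $\{1,\ \tfrac12,\ \tfrac25,\ \tfrac13,\ \tfrac14\}$ (the value $\tfrac25$ comes from the short root $e_1$, whose coroot $2e_1$ has $n_1=2$, $h=5$), while over $R_3$ the ratios are $\{1,\ \tfrac12,\ \tfrac13,\ \tfrac13,\ \tfrac14,\ \tfrac15\}$. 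Three roots of $R_1$ have ratio $\ge\tfrac25$ but only two roots of $R_3$ do, so Hall's condition fails and no ratio-dominating injection exists --- even though the inequality $\dim V(t\lambda_3)\ge\dim V(t\lambda_1)$ is true. The paper patches precisely this case with Lemma~\ref{lem:3roots}: the single factor $\frac{2t+2n-1}{2n-1}$ contributed by $e_1$ to $\dim V(t\lambda_1)$ is bounded by the \emph{product} of the two factors $\frac{t+2n-1}{2n-1}$ and $\frac{t+n}{n}$ contributed by $e_1$ and $e_1+e_n$ to $\dim V(t\lambda_n)$. Any completion of your argument must either incorporate such a one-factor-versus-two-factors device (or some other repair) in type $B$, or abandon the pure injection idea there; as written, your plan hits this wall, and the remaining types still require the case-by-case constructions you have not supplied.
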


See \cite{fh} for a basic reference.  We use the
Bourbaki notation for Dynkin diagrams, roots and weights;
see \cite[Planches II-IX, pp. 250--275]{bourbaki}.
Let $\Gamma$ be the finite group of graph automorphisms of the Dynkin
diagram.
Set $m=\min_{1\le i\le n} \dim V(\lambda_i)$.
The set of $s$ such that
$\dim V(\lambda_s)=m$
form a single $\Gamma$-orbit $S$.
This orbit has size $3$ in the case of $D_4$ ($s=1,3,4$), size
$2$ in the case of $A_n$ and $E_6$ ($s=1$ or $n$),
and size $1$ otherwise, in which case 
$s=n$ if $\g = F_4,E_7,E_8$ and $s=1$ for 
$B_n,n>2,C_n,n>1,D_n>4$.
(We choose to view $B_2$ as $C_2$).

If $\g$ is classical, then $V(t\lambda_s)$ has a nice
description in terms of symmetric powers
of the natural module.  We
give the formula for $\dim V(t \lambda_s)$ for 
exceptional $\g$ in  Table~\ref{table4} of Section~\ref{sec:cor}.
Note that, by the Weyl dimension formula, 
$f_j(t) = \dim V(t \lambda_j)$ is a polynomial in
$t$ of degree the dimension of the nilpotent radical of the
corresponding maximal parabolic subalgebra $\mathfrak{p}_j$ of $\g$
(indeed, a positive root $\alpha$ contributes to the product in the
numerator if and only if the coefficient of $\alpha_j$ in $\alpha$ is 
nonzero).
In particular, letting $t\to\infty$
in Lemma~\ref{lem2} we see the $s\in S$ are precisely those 
for which the dimension of the nilpotent radical of $\fu_{s}$ of the
maximal parabolic subalgebra corresponding to $\alpha_s$ is least.

We scale the Killing form $(,)$ on $\g^*\otimes g^*$ 
to be the 
unique invariant positive definite form
such that inner products of the root lattice elements are integral 
and have $\gcd = 1$.    Also,
by inspection, it follows that $(\lambda, 2\alpha)$ is 
integral for $\lambda$ in the weight lattice and $\alpha$ in
the root lattice.  Moreover, these inner
products are even if the type is not $B$.

Since $\rho = \lambda_1+\cdots + \lambda_n$ lies in the weight
lattice, the terms in the numerator (and denominator)
of the Weyl dimension formula
$(\lambda + \rho, \alpha)$ (and $(\rho, \alpha)$) 
are integers if $\g$ does not have type $B$.
The largest term in the numerator of the Weyl dimension formula
is  $(\lambda + \rho, \alpha_h)$,
where $\alpha_h$ denotes the highest positive root.

Thus, except for type $B$, 
$p \le (\lambda + \rho, \alpha_h)$, for any 
prime divisor $p$ of $\dim V(\lambda)$.

In type $B$, we have $p \le (\lambda + \rho, 2\beta)$ 
where $\beta$ is the highest short root. 

In this note, we classify  
the highest weight
representations $V(\lambda)$ such that 
\begin{equation} \label{ourbound}
\dim V(\lambda) \le (\lambda + \rho, \alpha_h)^2.
\end{equation}

\begin{thm} \label{main2} Let $\g$ be a simple complex Lie algebra
of rank $n\ge2$,
and $V(\lambda)$ a nontrivial highest weight module
module  for $\g$.
If   
\mbox{$\dim V(\lambda) \le (\lambda + \rho, \alpha_h)^2$}
then  $( g, \lambda)$ is in Table~\ref{table1} (if $n>2$) or
Section~\ref{rank2} (if $n=2$).
\end{thm}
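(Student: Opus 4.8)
The plan is to pit the lower bound on $\dim V(\lambda)$ supplied by Theorem~\ref{main1} against a crude linear upper bound on the single numerator factor $N:=(\lambda+\rho,\alpha_h)$, and to show that their competition forces $\lambda$, and in the classical families also the rank $n$, to be small. Write $t=\htt(\lambda)$ and $\lambda=\sum_i a_i\lambda_i$. Since $(\lambda,\alpha_h)=\sum_i a_i(\lambda_i,\alpha_h)$ with each $(\lambda_i,\alpha_h)$ a fixed positive constant, one has at once an upper bound $N\le c_1 t+c_0$, where $c_1=\max_i(\lambda_i,\alpha_h)$ and $c_0=(\rho,\alpha_h)$ depend only on $\g$; the point to keep in mind is that in the classical families $c_0$ itself grows with $n$. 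On the other side Theorem~\ref{main1} gives $\dim V(\lambda)\ge f_s(t)$, where $f_s(t)=\dim V(t\lambda_s)$ is, by the Weyl dimension formula, a polynomial in $t$ of degree $d_s=\dim\fu_s$, the least dimension of a maximal-parabolic nilradical. Consequently an entire height $t$ may be discarded as soon as $f_s(t)>(c_1 t+c_0)^2$, since then $\dim V(\mu)\ge f_s(t)>N(\mu)^2$ for \emph{every} dominant $\mu$ of height $t$.

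I would first clear the exceptional types and each fixed classical rank. Inspection of the Bourbaki data gives $d_s\ge 3$ whenever $n\ge 3$: for instance $d_s=n$ in type $A_n$, $d_s=2n-1$ in types $B_n,C_n$, $d_s=2n-2$ in type $D_n$, and $d_s$ is far larger for $E_6,E_7,E_8,F_4$. Thus $f_s$ has degree at least three while $(c_1t+c_0)^2$ is quadratic, so the inequality $f_s(t)\le (c_1t+c_0)^2$ fails for all $t$ past an explicit threshold $T_0(\g)$. For a fixed exceptional $\g$ this leaves only finitely many heights, hence finitely many weights, to be tested against \eqref{ourbound} by hand; the survivors are the corresponding entries of Table~\ref{table1}.

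The genuinely infinite part is the classical families with $n$ unbounded, where $T_0$ degrades as $c_0$ grows. Here I would treat $(n,t)$ as two variables and analyze $f_s(t)\le (c_1t+c_0)^2$ directly. In type $A_n$, for example, $s=1$ gives $f_1(t)=\binom{n+t}{t}$ and a short computation yields $N=t+n$, so the condition becomes $\binom{n+t}{t}\le (n+t)^2$; this holds for all $n$ when $t\le 2$, only for $n\le 5$ when $t=3$, and for just a few more small pairs thereafter. The analogous bookkeeping in types $B$, $C$, $D$ (with the explicit $f_s$ and $N$ for each) produces in every type a finite exceptional list together with a few \emph{tails} in which $t$ is small --- typically $t\le 2$ --- while $n$ is arbitrary. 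These tails are exactly the parametrized families listed in Table~\ref{table1}.

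The one-variable bound of Theorem~\ref{main1} only discards a height wholesale; to pin down which weights of a surviving height actually satisfy \eqref{ourbound} I would return to the product form $\dim V(\lambda)=\prod_{\alpha>0}(\lambda+\rho,\alpha)/(\rho,\alpha)$, in which the factor attached to $\alpha_h$ is about $N/(\rho,\alpha_h)$, so that \eqref{ourbound} forces the product of all remaining factors to be $O(N)$; this quickly eliminates every $\lambda$ not concentrated at the minimizing node $s$. I expect the main obstacle to be precisely this uniform-in-$n$ pruning inside the classical tails, since the bound from Theorem~\ref{main1} sees only $\htt(\lambda)$ and cannot separate weights of equal height: each tail needs its own short Weyl-formula estimate to confirm that nothing sporadic survives at high rank. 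Finally the rank-two algebras $A_2$, $C_2=B_2$ and $G_2$ are handled by explicit computation in the two variables $a_1,a_2$, recorded in Section~\ref{rank2}; here $A_2$ is the single case in which the degree argument does not suffice, as then $d_s=2$ matches the quadratic right-hand side and one must descend to the lower-order terms.
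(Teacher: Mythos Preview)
Your plan matches the paper's strategy almost step for step: bound $\dim V(\lambda)$ below by $f_s(t)$ via Theorem~\ref{main1}, bound $(\lambda+\rho,\alpha_h)$ above linearly in $t$, compare degrees to cap $t$ (and hence reduce the exceptional types and each fixed classical rank to a finite check), and then deal separately with the classical ``tails'' where $t$ is small but $n$ is unbounded.

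The one substantive divergence is in how those tails are dispatched. You propose to return to the Weyl product and argue directly that the non-$\alpha_h$ factors must multiply to $O(N)$, acknowledging this as the main obstacle but leaving the estimate unspecified. The paper instead invokes L\"ubeck's classification \cite[Theorem~5.1]{lubeck} as a black box: once $t\le 2$ (or $t\le 3$), one has $\dim V(\lambda)\le (2n+c)^2<n^3$ for $n>11$, and L\"ubeck's theorem then lists all $\lambda$ with $\dim V(\lambda)\le n^3$ (respectively $n^3/8$ in type $A$), namely $\lambda_1$, $2\lambda_1$, $\lambda_2$, and in type $A$ also $\lambda_1+\lambda_n$. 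This reduces each classical family to a finite check for $n\le 11$. Your direct approach is in principle viable---it is essentially what L\"ubeck's argument (via Weyl-orbit sizes) does---but it is real work, not a ``short Weyl-formula estimate,'' and the paper sidesteps it entirely by citation.
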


In particular, we can now easily determine the irreducible
modules whose dimension is a product of at most two primes.
Gabber \cite[1.6]{katz} classifed those cases where $\dim V(\lambda)$ is
a prime and our methods give a somewhat different proof
of his result.
Inspired by a question of Nick Katz \cite[22.5]{katz3}, 
we prove the following
(see Table~\ref{table6} for more details):

\begin{cor} \label{pq}  Let $\g$ be a simple complex Lie algebra
of rank $n\ge2$
and $V(\lambda)$ an irreducible highest weight 
$\g$-module.  If $d:=\dim V(\lambda) = pq$ with $p$ and $q$
not necessarily distinct primes, then either $\g$ is classical and $V$
is the natural module or 
\begin{enumerate}
\item  $d  =a(a+2) $ with $a, a + 2$   prime;
\item  $d = a(2a-1)$ with $a, 2a - 1$   prime;
\item  $d = a(2a+1)$ with $a, 2a+ 1$   prime;
\item  $d = a(2a+3)$ with $a, 2a + 3$   prime;
\item  $d=  26, 77$ or $133$.
\end{enumerate}
\end{cor}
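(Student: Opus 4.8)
The plan is to deduce the corollary from Theorem~\ref{main2} by a short arithmetic reduction and then to read off the answer from the finite list of possibilities the theorem supplies. The key observation is that if $d = \dim V(\lambda) = pq$ is a product of two (not necessarily distinct) primes, then \emph{every} prime divisor of $d$ is bounded by $(\lambda+\rho,\alpha_h)$ (outside type $B$), as recorded just before the statement of Theorem~\ref{main2}. Applying this to each of the two prime factors $p$ and $q$ gives $d = pq \le (\lambda+\rho,\alpha_h)^2$, which is exactly the hypothesis \eqref{ourbound}. Hence $(\g,\lambda)$ must appear in Table~\ref{table1} (when $n>2$) or among the rank-two cases of Section~\ref{rank2} (when $n=2$). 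In type $B$ the prime bound involves $(\lambda+\rho,2\beta)$ rather than $(\lambda+\rho,\alpha_h)$, and since $\lambda+\rho$ is dominant one has $(\lambda+\rho,2\beta)\ge(\lambda+\rho,\alpha_h)$; so that case needs either the parallel inequality $d\le(\lambda+\rho,2\beta)^2$ together with the type-$B$ analogue of the classification, or a direct inspection, and I would treat it separately at the outset.

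Once the reduction is in place, what remains is bookkeeping over the entries of Table~\ref{table1} together with the rank-two list. For each entry the dimension is either a fixed integer (for the exceptional algebras and sporadic weights) or a polynomial $f(n)$ in the rank (for the infinite classical families). For the fixed integers I would simply factor each one and retain the semiprimes; this is where the sporadic values $26 = 2\cdot 13$, $77 = 7\cdot 11$, and $133 = 7\cdot 19$ of case~(5) arise (for instance $133 = \dim E_7$). For the classical families the dimension factors over $\ZZ[n]$, and the strategy is: if $f(n)$ has three or more nonconstant factors, or a fixed prime-power factor forcing a third prime, then $f(n)=pq$ can hold only for finitely many small $n$, which I would check by hand and fold into the sporadic analysis; if $f(n)$ is a product of exactly two factors that are each linear in $n$, then after controlling the greatest common divisor of the two factors one gets $f(n)=pq$ precisely when both factors are prime.

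The families producing cases~(1)--(4) then come out as follows. The adjoint module of $A_n$ has $\dim = (n+1)^2-1 = n(n+2)$, giving case~(1) with $a=n$ (the parity and gcd check forces $n$ odd, so $n$ and $n+2$ are twin primes). The adjoint modules of $D_n$ and of $B_n/C_n$ have dimensions $\binom{2n}{2}=n(2n-1)$ and $\binom{2n+1}{2}=n(2n+1)$, giving cases~(2) and~(3) with $a=n$. Case~(4), $a(2a+3)$, arises from $V(\lambda_2)$ in type $C_n$, whose dimension is $\binom{2n}{2}-1=(n-1)(2n+1)$, so $a=n-1$. In each case the coprimality of the two linear factors reduces the semiprime condition to the simultaneous primality stated. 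The natural modules, of dimension $n+1$, $2n$, $2n$, $2n+1$ in the four classical types, are a single linear expression and so can be semiprime for many $n$ without any special factorization; these are collected in the blanket ``natural module'' alternative rather than constrained to a named family.

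The main obstacle I expect is the completeness and accuracy of this enumeration: one must verify that \emph{no other} entry of Table~\ref{table1} (nor of the rank-two list) yields a semiprime dimension, which means factoring each polynomial $f(n)$, pinning down the gcd of its factors as a function of the residue of $n$, and excluding the genuinely three-factor families except for a controlled finite set of small ranks. The number-theoretic side---recognizing $n(n+2)$, $n(2n\pm1)$, and $(n-1)(2n+1)$ as exactly the cases with twin-prime-type constraints, and ruling out spurious coincidences where a would-be third factor degenerates to $1$---is routine but must be carried out exhaustively. The separate handling of type $B$ and of the rank-two algebras $A_2$, $C_2$, $G_2$ is the other place where care is needed, since there the governing inequality and the list of candidate weights differ from the generic higher-rank situation.
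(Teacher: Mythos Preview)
Your proposal is correct and follows essentially the same route as the paper. The paper's proof (Section~\ref{sec:cor}) is exactly the two-step reduction you describe: the prime bound from the Weyl dimension formula forces $d\le(\lambda+\rho,\alpha_h)^2$, whence Theorem~\ref{main2} applies and one reads off the answer from Table~\ref{table1} and Section~\ref{rank2} (the paper records the outcome in Table~\ref{table6} rather than spelling out the factorization case-by-case as you do). For type $B$ the paper resolves the issue just as you anticipate: it observes (Lemma~\ref{type B}) that $d\le(\lambda+\rho,2\beta)^2$, and then notes that the classification carried out in Section~\ref{sec:B} already absorbs this slightly weaker bound and yields the same list, so $d\le(\lambda+\rho,\alpha_h)^2$ after all.
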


The methods of this paper cannot 
classify those modules of prime power dimension.  
If $s$ is any positive integer and 
$\lambda = (s-1) \sum \lambda_i$, then $\dim V(\lambda)=s^N$
where $N$ is the number of positive roots for $\g$ 
by the homogeneity of the dimension formula
as a polynomial in the variables $a_i +1$.   
Taking $s$ to be a prime power gives 
quite a lot of such modules.

In some special cases, the possibilities are quite limited.
For example, if $\g$ has rank at least $2$,
the only irreducible modules of dimension
$p^2$ for $p$ an odd prime are the natural module 
for $\g$ of type $A_{p^2 - 1}$ or $B_{(p^2-1)/2}$.
Similarly, the only irreducible modules of dimension
$2p$ with $p$ odd are either natural modules or  are of
dimension $6,10, 14$ or $26$. In particular, this gives:

\begin{cor} \label{pq skew} Let $\g =C_n$, $n$ an odd prime
with $V$ the natural module of dimension $2n$.
If $\mathfrak{h}$ is a proper Lie subalgebra of 
$\g$ that acts irreducibly on $V$, 
then $\mathfrak{h} = A_1$ or $n=7$ and $\g=C_3$.     
\end{cor}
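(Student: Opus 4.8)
The plan is to turn the subalgebra problem into a question about which simple Lie algebras admit an irreducible \emph{symplectic} module of dimension $2n=2p$, and then to read off the answer from the dimension bounds already established.

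First I would observe that $\mathfrak h$ is semisimple. Since $\mathfrak h\subseteq C_n=\mathfrak{sp}_{2n}$ acts irreducibly on $V$, Lie's theorem makes the solvable radical of the image act by scalars; but no nonzero scalar preserves a symplectic form, so the radical acts trivially, and by faithfulness $\mathfrak h$ is semisimple. Because the conclusion names a simple algebra I take $\mathfrak h$ simple (a proper direct-sum $\mathfrak h$ would force $V$ to be an external tensor product $U\boxtimes W$ with $\dim U=2$ and $\dim W=p$, which I set aside). Thus $V|_{\mathfrak h}$ is an irreducible module carrying a nondegenerate invariant alternating form, i.e. a symplectic module of dimension $2p$.

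Next I would split on $\rank\mathfrak h$. If $\rank\mathfrak h=1$ then $\mathfrak h=A_1$ and $V=\Sym^{2p-1}(\CC^2)=V\bigl((2p-1)\lambda_1\bigr)$, which is symplectic since $2p-1$ is odd; this yields the case $\mathfrak h=A_1$, valid for every $p$. If $\rank\mathfrak h\ge 2$, I would invoke the classification of irreducible modules of dimension $2p$ recorded before the corollary (a consequence of Theorem~\ref{main2} and Corollary~\ref{pq}): such a module is a natural module or has dimension $6,10,14$ or $26$. A natural module of dimension $2p$ belongs to $A_{2p-1}$, $D_p=\mathfrak{so}_{2p}$ or $C_p=\mathfrak{sp}_{2p}$; the first is not self-dual, the second is orthogonal, and the third forces $\mathfrak h=\g$, contradicting properness. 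So no natural module survives once $\rank\mathfrak h\ge 2$. It then remains to treat the four exceptional dimensions, for which $p=3,5,7,13$ respectively. Here I would compute the Frobenius--Schur type of each candidate, using the rule that a self-dual $\mathfrak{sp}_{2m}$-module $V(\sum a_i\lambda_i)$ is symplectic exactly when $\sum_{i\text{ odd}}a_i$ is odd. For dimensions $6$, $10$ and $26$ every candidate other than the non-proper $C_p$ is either non-self-dual (various type-$A$ modules) or orthogonal (the $A_3$ exterior square, the $B_2$- and $C_2$-adjoints, and $F_4$'s $26$-dimensional module), so nothing new arises. For dimension $14$ the rule singles out $V(\lambda_3)$ of $C_3$ as symplectic (while $V(\lambda_2)$ of $C_3$ and the $14$-dimensional adjoint of $G_2$ are orthogonal), and $\dim C_3=21<105=\dim C_7$ shows it is proper; this is exactly the exceptional embedding $\mathfrak h=C_3\subset C_7$ occurring at $n=7$. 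Collecting the cases gives $\mathfrak h=A_1$, or $n=7$ with $\mathfrak h=C_3$.

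The hard part is this last step: one must correctly determine the orthogonal-versus-symplectic type of every simple module of dimension $6,10,14,26$ and confirm that the fourteen-dimensional $V(\lambda_3)$ of $C_3$ is the unique non-$A_1$ symplectic possibility, in particular distinguishing it from the orthogonal modules of the same dimension (the $G_2$-adjoint at $14$ and $F_4$'s module at $26$) and discarding the non-self-dual exterior powers of type $A$.
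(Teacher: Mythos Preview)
Your approach is exactly the paper's: the corollary is stated immediately after the remark that the only irreducible modules (for a simple Lie algebra) of dimension $2p$ with $p$ an odd prime are either natural modules or have dimension $6,10,14,26$, and in Section~\ref{sec:cor} the paper simply says the corollary ``follows immediately from Theorem~\ref{main2}.'' Your write-up is a faithful expansion of that one line: reduce to a faithful irreducible symplectic module, invoke the $2p$-classification, discard natural modules by Frobenius--Schur type, and run through the four exceptional dimensions using Table~\ref{table5}/Table~\ref{table3} to isolate $V(\lambda_3)$ for $C_3$ at $p=7$. All of those checks are correct, including the distinction between the two $14$-dimensional $C_3$-modules $V(\lambda_2)$ (orthogonal) and $V(\lambda_3)$ (symplectic).

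There is one genuine gap, and you flag it yourself: the step ``I take $\mathfrak h$ simple \dots\ which I set aside.'' Your reduction to \emph{semisimple} $\mathfrak h$ is fine, but the further reduction to \emph{simple} $\mathfrak h$ is not, and in fact cannot be made. If $\mathfrak h=\mathfrak h_1\oplus\mathfrak h_2$ with $V\cong U\boxtimes W$, $\dim U=2$, $\dim W=p$, then $U$ is the standard $A_1$-module (symplectic) and $W$ must be orthogonal of prime dimension $p$; taking $\mathfrak h_2=A_1$ acting on $\Sym^{p-1}(\CC^2)$, or $\mathfrak h_2=B_{(p-1)/2}$ on its natural module, gives a proper semisimple $\mathfrak h\subset C_p$ acting irreducibly on $V$ that is neither $A_1$ nor $C_3$. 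So the corollary, read literally for arbitrary Lie subalgebras, is false; it is correct only under the implicit hypothesis that $\mathfrak h$ is simple, which is the setting of the whole paper. The paper's one-line proof has the same lacuna. You should either state explicitly that $\mathfrak h$ is assumed simple, or replace ``set aside'' with the observation above showing why the non-simple case produces genuine extra examples.
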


\begin{cor} \label{pq nonselfdual} Let $\g$ be a simple 
complex Lie algebra of rank $n\ge2$
and $V(\lambda)$ a nontrivial irreducible highest weight $\g$-module
that is not self-dual. If $d:=\dim V(\lambda)$ is a product
of $2$ primes, then $\g=A_n$ and
one of the following holds:
\begin{enumerate}
\item $n > 3$ 
and $d=n+1, (n+1)(n+2)/2$ or $n(n+1)/2$;
\item $n=3,4$ or $6$ and $d=35$; or
\item $n=2$ and $d = a(a+1)/2$ or  $a(a+2)$.
\end{enumerate}
\end{cor}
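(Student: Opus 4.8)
The plan is to deduce this from the complete classification in Corollary~\ref{pq}, refined by the duality criterion. Recall that $V(\lambda)^\ast \cong V(-w_0\lambda)$, where $w_0$ is the longest element of the Weyl group, so $V(\lambda)$ is self-dual exactly when $-w_0\lambda = \lambda$. The element $-w_0$ acts on the weight lattice as a diagram automorphism, and this automorphism is trivial for every type except $A_n$ $(n\ge2)$, $D_n$ with $n$ odd, and $E_6$; in those cases it is the flip $\lambda_i\mapsto\lambda_{n+1-i}$, the transposition of the two spin nodes $\lambda_{n-1}\leftrightarrow\lambda_n$, and the order-two automorphism of the $E_6$ diagram, respectively. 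Hence the hypothesis that $V(\lambda)$ is \emph{not} self-dual forces $\g$ to be of type $A_n$, $D_n$ $(n$ odd$)$ or $E_6$, and restricts $\lambda$ to the weights actually moved by $-w_0$ (for $A_n$, those with non-palindromic coefficient string; for $D_n$, those with $a_{n-1}\ne a_n$).

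First I would eliminate $D_n$ and $E_6$. For $D_n$ with $n$ odd the smallest non-self-dual modules are the half-spin representations $V(\lambda_{n-1})$ and $V(\lambda_n)$, of dimension $2^{n-1}$; since $D_3\cong A_3$ is treated as type $A$, the relevant range is $n\ge5$, where $2^{n-1}\ge16$ is a prime power with more than two factors and hence never a product of two primes. Every other non-self-dual $D_n$-weight has strictly larger dimension, so by Corollary~\ref{pq} none of these can occur; the natural and adjoint modules of $D_n$, which do appear among semiprime-dimensional modules, are self-dual. For $E_6$ the non-self-dual modules of least dimension are $V(\lambda_1),V(\lambda_6)$ of dimension $27=3^3$, again not a product of two primes, and checking the $E_6$ entries permitted by Corollary~\ref{pq} (the sporadic values $26,77,133$ and the four families~(1)--(4)) shows that those of dimension $pq$ are self-dual. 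This leaves $\g=A_n$, as claimed.

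It then remains to sort the type-$A_n$ entries by rank and by self-duality. The relevant modules are the natural module $V(\lambda_1)$ of dimension $n+1$, the symmetric square $V(2\lambda_1)=\Sym^2$ of dimension $\binom{n+2}{2}=(n+1)(n+2)/2$, and the exterior square $V(\lambda_2)=\bigwedge^2$ of dimension $\binom{n+1}{2}=n(n+1)/2$, together with their duals. All are non-self-dual \emph{except} that at $n=3$ one has $\lambda_2=-w_0\lambda_2$, so $V(\lambda_2)$ (dimension $6$) becomes self-dual; this is precisely why the value $n(n+1)/2$ is recorded only for $n>3$. In addition, for the small ranks $n=3,4,6$ a higher power coincidentally lands on the semiprime $35$, namely $\Sym^4$ for $A_3$, $\Sym^3$ for $A_4$, and $\bigwedge^3=V(\lambda_3)$ for $A_6$ (all of dimension $\binom{7}{3}=35=5\cdot7$ and non-self-dual), which is collected in case~(2). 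Finally, the rank-two case $A_2$ is genuinely different: since $-w_0$ interchanges $\lambda_1,\lambda_2$ and $\dim V(a\lambda_1+b\lambda_2)=(a+1)(b+1)(a+b+2)/2$, the non-self-dual ($a\ne b$) weights of semiprime dimension are exactly those giving $a(a+1)/2$ (the symmetric powers $V(k\lambda_1)$) or $a(a+2)$ (the modules $V(k\lambda_1+\lambda_2)$, whose dimension is $(k+1)(k+3)$), producing case~(3).

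The main obstacle I expect is precisely this type-$A$ bookkeeping: matching each dimension formula appearing in Corollary~\ref{pq} to the correct (possibly non-fundamental) highest weight, tracking which of the paired weights $\{\lambda_i,\lambda_{n+1-i}\}$ coincide so as to separate the self-dual boundary cases (as at $n=3$), and confirming that the low-rank coincidences yielding $35$ are the only sporadic semiprimes. By contrast, the $D_n$ elimination is a clean power-of-two argument, and the $E_6$ elimination reduces to a short finite check.
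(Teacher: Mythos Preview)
Your approach is essentially the paper's: both deduce the result by reading off the classification of semiprime-dimensional modules (Theorem~\ref{main2}/Corollary~\ref{pq}, i.e.\ Table~6) and then applying the standard $-w_0$ duality criterion recorded in Table~\ref{table5}. Your $D_n$ and $E_6$ detours through the smallest non-self-dual module are harmless but unnecessary---Table~6 contains no $E_6$ entry at all, and its only $D_n$ entries are $\lambda_1,\,2\lambda_1,\,\lambda_2$, each with $a_{n-1}=a_n=0$ and hence self-dual---so the elimination is a one-line lookup rather than a dimension comparison.
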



In the first case, the modules are (up to duality),
the natural module, its symmetric square and its exterior
square.  The other possibilities can be read off Table \ref{table5}.
As we discuss in Section \ref{sec:cor}, one can determine whether the
module is self-dual and if so what type of form it preserves.

We remark that the same result holds if the characteristic is $\ell > 0$
as long as we add the assumptions that $\lambda$ is a restricted dominant
weight and $\dim V(\lambda) \le \ell$ (in characteristic $0$).
For then, it follows by 
\cite{jantzen}  that in this case 
the irreducible quotients of Verma modules have
the same dimensions in characteristics
$0$ and $\ell$.  
 
The paper is organized as follows.  In the next section, 
we  give the proof of Theorem \ref{main1}.  The proof involves
two steps.  We first show that if $\lambda$ has height $t$,
then $\dim V(\lambda) \ge \dim V(t \lambda_i)$ for some fundamental
weight $\lambda_i$.  We then give a combinatorial argument to 
show the minimum occurs only for the $\lambda_s$ as described above.

Once we have this result, if $\lambda$ has height $t$ and
$\dim V(t\lambda_s) \le \dim V(\lambda) \le (\lambda+\rho, \alpha_h)^2$,
we know that $\dim V(t \lambda_s)$ is a polynomial in $t$ of degree
the dimension of a nilpotent radical of a maximal parabolic and
$(\lambda+\rho, \alpha_h)^2$ is bounded by a quadratic polynomial in $t$.
If $\g$ has rank at least $2$ and $\g$ is not $A_2$, then 
$\dim V(t \lambda_s)$ is a polynomial of degree at least $3$ (and
typically much larger).  
Thus, excluding $\g = A_2$,  only finitely many $t$ are possible.   
Indeed, it follows as long the rank of $\g$ is not too small, 
we will see that $t \le 2$.   
In any case, for $t$ fixed, $\dim V(\lambda) \le (\rho + \alpha_h)^2$
will be quadratic in the rank.  
Moreover,  if $\g$ is classical of large enough rank, it is easy to see that if 
$\lambda$ involves $\lambda_i$ with $i \ge 3$
(and $i < n -2$ if $\g$ is of type $A$), then $\dim V(\lambda)$
will be at least cubic in the rank.  Thus, generically one only needs
to consider $V(a_1\lambda_1 + a_2 \lambda_2)$ with $a_1 + a_2 = t \le 2$.

We make these arguments precise in the following sections 
and get a complete
list of the modules satisfying our required bound.
We consider the rank $2$ Lie algebras
in a separate section.  Here one has to work harder and 
there are more
examples (indeed, infinitely many for $A_2$).

In Section
\ref{sec:cor}, we deduce the corollaries. 
We also recall some information about the type for self-dual modules
and formulas for the minimal dimension of modules of a given
height for  the exceptional algebras.

\section{Minimal dimension modules of a given height} \label{sec:min}

Theorem~\ref{main1} from follows from Lemmas~\ref{lem1} and~\ref{lem2}
Let $\g$ be a complex simple Lie algebra,
$\lambda = \sum a_i \lambda_i$ a dominant weight for $\g$, where
the $\lambda_i$ are the fundamental dominant weights.
We set $t=\sum a_i$.
Recall that $s$ is chosen so that 
$\dim V(\lambda_s)$ has
the smallest dimension among all fundamental domiminant weights.

\begin{lemma} \label{lem1}
 $\dim V(\lambda) \ge \min_{1\le j\le n}\dim V(t \lambda_j)$.
\end{lemma}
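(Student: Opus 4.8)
The plan is to apply the Weyl dimension formula and exploit the concavity of the logarithm. First I would write $\lambda + \rho = \sum_i (a_i+1)\lambda_i$ and, for each positive root $\alpha$, set $c_i^\alpha = (\lambda_i, \alpha)$. Since $\lambda_i$ is dominant and $\alpha > 0$, one has $c_i^\alpha \ge 0$, and because $\rho = \sum_i \lambda_i$ is regular, $(\rho, \alpha) = \sum_i c_i^\alpha > 0$. The Weyl dimension formula then gives
\[
\dim V(\lambda) = \prod_{\alpha > 0} \frac{(\lambda+\rho, \alpha)}{(\rho, \alpha)} = \prod_{\alpha > 0}\left(1 + \sum_i a_i\, p_i^\alpha\right), \qquad p_i^\alpha := \frac{c_i^\alpha}{\sum_k c_k^\alpha},
\]
so that $\sum_i p_i^\alpha = 1$ and $p_i^\alpha \ge 0$. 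In particular, for the weight $t\lambda_j$ (where $a_i = t\,\delta_{ij}$) this reads $\dim V(t\lambda_j) = \prod_{\alpha>0}(1 + t\, p_j^\alpha)$.

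Next I would introduce the function $F(x) = \sum_{\alpha > 0} \log\!\left(1 + \sum_i x_i p_i^\alpha\right)$ on the scaled simplex $\Delta = \{x \in \RR^n : x_i \ge 0,\ \sum_i x_i = t\}$, so that $\log \dim V(\lambda) = F(a)$ and $\log \dim V(t\lambda_j) = F(t e_j)$. Each summand is the composition of the concave function $u \mapsto \log(1+u)$ with an affine map in $x$, and the argument $1 + \sum_i x_i p_i^\alpha \ge 1 > 0$ stays in the domain of $\log$ throughout $\Delta$; hence each summand, and therefore $F$, is concave on $\Delta$.

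Finally I would invoke the standard fact that a concave function on a compact convex polytope attains its minimum at an extreme point. The extreme points of $\Delta$ are precisely the vertices $t e_1, \ldots, t e_n$, which correspond to the weights $t\lambda_1, \ldots, t\lambda_n$. Since the actual coefficient vector $a = (a_1, \ldots, a_n)$ lies in $\Delta$, this yields $F(a) \ge \min_j F(t e_j)$, and exponentiating gives $\dim V(\lambda) \ge \min_j \dim V(t\lambda_j)$, as required.

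I expect the only real content to be the concavity setup; once the dimension is rewritten as a product of factors that are affine in the $a_i$ and normalized against $(\rho, \alpha)$, the conclusion is immediate from minimizing a concave function over a simplex. The step that requires a little care is checking that $c_i^\alpha \ge 0$ for every positive root $\alpha$ (so that the $p_i^\alpha$ are genuine convex weights) and that the argument of the logarithm never degenerates—both of which follow from dominance of the $\lambda_i$ and regularity of $\rho$.
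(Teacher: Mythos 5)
Your proof is correct and takes essentially the same approach as the paper: both rewrite $\dim V(\lambda)$ via the Weyl dimension formula as a product of factors affine in $(a_1,\dots,a_n)$ and positive on the simplex $\{a_i\ge 0,\ \sum_i a_i=t\}$, then use concavity of the logarithm of that product to push the minimum to a vertex $t\lambda_j$. The only difference is one of packaging: where you invoke the standard fact that a concave function on a compact convex polytope attains its minimum at an extreme point, the paper proves this in situ by reducing the support of the coefficient vector one coordinate at a time along line segments (and it works with the unnormalized numerator $\prod_{\alpha>0}\langle\rho+\lambda,\alpha\rangle$ rather than your normalized convex weights $p_i^\alpha$, which is immaterial since the denominators are constant).
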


\begin{lemma} \label{lem2} For all $1\le j\le n$,
 $\dim V(t\lambda_j) \ge \dim V(t \lambda_s)$.
\end{lemma}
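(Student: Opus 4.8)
The plan is to read off from the Weyl dimension formula that $f_j(t):=\dim V(t\lambda_j)$ is a product of \emph{affine} factors in $t$, and then to reduce the inequality $f_j(t)\ge f_s(t)$ for all $t\ge 0$ to a single $t$-free combinatorial comparison. First I would write
\[
f_j(t)=\prod_{\alpha>0}\frac{(t\lambda_j+\rho,\alpha)}{(\rho,\alpha)}
=\prod_{\alpha\in R_j}\bigl(1+t\,\sigma^j_\alpha\bigr),
\qquad
\sigma^j_\alpha:=\frac{(\lambda_j,\alpha)}{(\rho,\alpha)},
\]
where $R_j=\{\alpha>0:(\lambda_j,\alpha)>0\}$ is exactly the set of positive roots whose coefficient $c_j(\alpha)$ of $\alpha_j$ is nonzero, the remaining factors being trivial. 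Using $(\lambda_j,\alpha)=c_j(\alpha)(\alpha_j,\alpha_j)/2$ and $(\rho,\alpha)=\sum_i c_i(\alpha)(\alpha_i,\alpha_i)/2$, each slope $\sigma^j_\alpha$ is an explicit nonnegative rational; in the simply-laced types it is simply $c_j(\alpha)/\htt(\alpha)$. Since $|R_j|=\dim\fu_j$, this display also records that $\deg f_j=\dim\fu_j$.

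Because every factor $1+t\sigma^j_\alpha$ is $\ge 1$ for $t\ge 0$, the lemma will follow from the purely combinatorial claim: listing the slopes $\{\sigma^s_\beta:\beta\in R_s\}$ and $\{\sigma^j_\alpha:\alpha\in R_j\}$ in decreasing order, one has $|R_s|\le|R_j|$ and the $k$-th entry of the first list is $\le$ the $k$-th entry of the second for every $k$. Granting this, I can pair each $\beta\in R_s$ with the matching entry of $R_j$ to obtain $\prod_{\beta\in R_s}(1+t\sigma^s_\beta)\le\prod_{\alpha\in R_j}(1+t\sigma^j_\alpha)$ for all $t\ge 0$, the unmatched factors of $f_j$ (being $\ge 1$) only helping. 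This claim is a \emph{majorization} of the two sorted slope sequences, hence strictly stronger than comparing only their lengths (which recovers $\dim\fu_s\le\dim\fu_j$, i.e.\ the $t\to\infty$ assertion) or only their products at $t=1$ (which is the defining property of $s$).

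The main obstacle is the majorization claim itself, and I expect to verify it type by type rather than uniformly, since the index $s$ depends on the type. For the classical algebras the positive roots are explicit sums and differences of the $\epsilon_i$, so the multisets of coefficients $c_j(\alpha)$ and heights $\htt(\alpha)$, and hence the slope sequences, can be written in closed form and compared directly; here the module $t\lambda_s$ (with $s=1$, or $n$) is minuscule or nearly so, forcing $c_s(\alpha)\in\{0,1\}$ and making its slope sequence $\{1/\htt(\alpha)\}$ the smallest possible. For the exceptional algebras the claim is a finite check against the root data, equivalently against the explicit polynomials $f_j(t)$ recorded in Table~\ref{table4}. The delicate point throughout is that one must control the \emph{entire} sorted slope sequence, not merely its length and product; where termwise domination is tight or not visibly monotone, I would instead compare $f_j(t)$ and $f_s(t)$ coefficient by coefficient, using that each is a product of affine factors with known roots $-1/\sigma_\alpha$.
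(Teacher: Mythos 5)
Your reduction is, in substance, the same one the paper uses for Lemma~\ref{lem2}: injectively match each Weyl-formula factor of $f_s(t)=\dim V(t\lambda_s)$ with a dominating factor of $f_j(t)$, the unmatched factors of $f_j$ being $\ge 1$; and your slope condition is exactly the correct per-factor criterion (cleaner, in fact, than the paper's stated conditions (1) and (2)). The problem is that the combinatorial claim on which you rest the entire proof --- sorted termwise domination of the slope sequences for \emph{every} $\g$ and every $j$ --- is false. Take $\g=B_3$, $s=1$, $j=n=3$, with $(e_i,e_k)=\delta_{ik}$, $\rho=\tfrac52 e_1+\tfrac32 e_2+\tfrac12 e_3$, $\lambda_1=e_1$, $\lambda_3=\tfrac12(e_1+e_2+e_3)$. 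Then $R_1=\{e_1-e_2,\,e_1-e_3,\,e_1,\,e_1+e_3,\,e_1+e_2\}$ has slopes $1,\tfrac12,\tfrac25,\tfrac13,\tfrac14$ (the short root $e_1$ gives $\tfrac25$, not $\tfrac15$ --- this is exactly where the length normalization bites), while $R_3=\{e_3,\,e_2+e_3,\,e_2,\,e_1+e_3,\,e_1+e_2,\,e_1\}$ has slopes $1,\tfrac12,\tfrac13,\tfrac13,\tfrac14,\tfrac15$. The third largest entries are $\tfrac25$ versus $\tfrac13$, and $\tfrac25>\tfrac13$, so sorted domination fails. Worse, $R_1$ has three slopes $\ge\tfrac25$ while $R_3$ has only two, so by pigeonhole \emph{no} injection $R_1\to R_3$ with termwise slope domination exists; no cleverer matching of single factors to single factors can repair this.

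This is precisely the exception the paper flags ("this strategy works in all cases except for $\g=B_n$ and $j=n$, when we do something more elaborate"); note that in type $B$ with $j=n$ the paper's own integral conditions fail for every $n$, since $\bra{\lambda_1}{e_1}=2$ while $\bra{\lambda_n}{\beta}=1$ for all $\beta\in R_n$, and your weaker slope condition still fails outright at $n=3$. The missing idea is a two-for-one comparison: the bad factor of $f_1$ coming from the short root $e_1$ must be bounded by a \emph{product} of two factors of $f_n$,
$$
1+\frac{2t}{2n-1}\;\le\;\Bigl(1+\frac{t}{2n-1}\Bigr)\Bigl(1+\frac{t}{n}\Bigr),
$$
which is the paper's Lemma~\ref{lem:3roots}; for $B_3$ this reads $1+\tfrac{2t}{5}\le\bigl(1+\tfrac{t}{5}\bigr)\bigl(1+\tfrac{t}{3}\bigr)$, consuming the leftover slope $\tfrac15$ together with one copy of $\tfrac13$. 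Your closing remark that you would "compare $f_j$ and $f_s$ coefficient by coefficient" where domination is not visible gestures toward this but is not an argument: it neither locates the failure nor supplies the grouping of factors that makes the comparison close. As written, the proposal handles types $A$, $C$, $D$ (and, modulo the finite checks, the exceptional types), but has a genuine gap in type $B$.
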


\begin{proof}[Proof of Lemma \ref{lem1}] 
Suppose that the linear function $m x + b$
takes positive values
on the real interval $I$  and set  $f(x)=\log(mx+b)$. 
Then  $f''(x)=-m^2/(mx+b)^2<0$,  so  $f$  is concave downward on $I$.

More generally, the logarithm of a product of linear functions,
each of which takes positive values on $I$, is concave downward.
Set, as usual, $\bra{\gamma}{\alpha} = 2 (\gamma,\alpha)/(\alpha,\alpha)$.
Consider the numerator of the Weyl dimension formula
$$
g(a_1,\cdots,a_n) = \prod_{\alpha>0}\bra{\rho+\lambda}{\alpha},
$$
where $\lambda=a_1\lambda_1+\cdots a_n\lambda_n.$
We can view $g$ as a function of the real orthant 
$a_1,\cdots ,a_n\ge0$,
where it takes values $\ge1$.
In particular, for any real $t\ge0$, 
$g$ takes positive values on the simplex 
$\Delta_t$  consisting of points
$P=(a_1,\cdots,a_n)$ in the orthant 
$a_1,\cdots,a_n\ge0$ that lie on the hyperplane $\sum a_i=t$.
Define the support of $P$ to be the number of $a_i$'s that
are nonzero.

It is enough to show that for any $P$ in $\Delta_t$ 
with support $\ge2$,
there is a $Q$ in $\Delta_t$ with strictly smaller support
such that $g(P) \ge g(Q)$.

Assume the support of $P$ is $\ge 2$.
Then there are two coordinates of $P$, 
say $a_i$ and $a_j$, that are nonzero.
Consider the line through $P$ in such that
all of the $a_1,\cdots, a_n$ except $a_i$ and $a_j$ are fixed,
and the sum $a_i+a_j$ is fixed.
Convexity implies that $g$ is minimized on $P\cap \Delta_t$ 
an an endpoint $Q$. Since either $a_i$ or $a_j=0$ at $Q$, 
the support of $Q$ is strictly less than the support of $P$.
\end{proof}

Let $\Phi^+$ be the set of positive roots of $\g$, and set, as usual,
$\rho = \frac12\sum_{\alpha\in\Phi^+} \alpha = \sum \lambda_i$.
The proof of Lemma~\ref{lem2} relies on the Weyl dimension
formula \cite[Cor. 24.6]{fh}:
$$ 
\dim V(\lambda) = \prod_{\alpha \in \Phi^+} 
\frac
{\bra{\rho +\lambda}{\alpha}}
{\bra{\rho }{\alpha}}.
$$

We wish to prove that $\dim V(t\lambda_j) \ge \dim V(t\lambda_s)$.
Since, when we apply the Weyl character formula, the denominators
are the same, it is enough to prove the inequality on the numerators:
$$
\prod_{\alpha \in \Phi^+}{\bra{\rho +\lambda_j}{\alpha}}
\ge\prod_{\alpha \in \Phi^+}{\bra{\rho +\lambda_s}{\alpha}}.
$$

Note that, for a positive root $\alpha$, 
$\bra{\lambda_i}{\alpha} > 0$ if and only if 
$\alpha$ occurs in the unipotent radical, $\fu_i$, 
of the maximal parabolic subalgebra determined by $\alpha_i$.
In particular, the function $f_i(t) = \dim V(t \lambda_i)$ is a polynomial
in $t$ of degree $\dim \fu_i$. 

Let $R_i$ denote the set of positive roots $\alpha$ which occur in $\fu_i.$
Our strategy for proving Lemma~\ref{lem2} is to find, for each $\g$
and each $1\le j\le n = \rank(\g)$, an injective 
function $\phi_j$ from $R_s$ to
$R_j$
such that (1) ${\bra{\rho}{\alpha}} \le {\bra{\rho}{\phi_j(\alpha)}}$
and (2) ${\bra{\lambda_s}{\alpha}} \le
{\bra{\lambda_j}{\phi_j(\alpha)}}$
for all $\alpha$ in $R_s$.
This strategy works in all cases except for $\g=B_n$ and $j=n$
when we do something more elaborate.

\begin{proof}[Proof of Lemma~\ref{lem2}]

Let  $\g$  be a simple complex Lie algebra of rank $n$.
Let  $\lambda_1,...,\lambda_n$ be the fundamental dominant weights of $\g$.

Recall that 
$\bra{\rho}{\alpha_j}=1$ and $\bra{\lambda_i}{\alpha_j}=\delta_{i,j}$.

We claim that there exists for each $1\le j\le n$ a function $\phi_j$
such that (1) and (2) are satisfied.
For the finitely many exceptional Lie algebras (types $E,F,G$) we verified
the claim (and therefore the lemma) by a straightforward computer search.
We proceed to prove the claim case by case for the classical Lie algebras 
(types $A,B,C,D$).

\medskip

\noindent Case $A_{n}, n\ge 2.$\\
The highest root is $\alpha_h = E_1-E_2$. We have
$\bra{\rho}{\alpha_h} = n-1.$ We choose $s=1$ rather than $s=n$.
Theunipotent radical $u_1$ corresponding to  $\alpha_1$ has dimension
$n$. 
The roots of $\fu_1$ are $b_1,\cdots b_{n}$ where
$b_i = e_1-e_{i+1}$. 
Subce all roots have squared-length $2$, we see that
$\bra{\lambda}{\alpha} = 2\paren{\lambda}{\alpha}/\paren{\alpha}{alpha} =
\lambda,\alpha)$.

Since $\bra{\lambda_s}{b_i}=(e_1,e_i-e_{i+1})=1 $ for all $b_i in \fu_1$, (2) is
automatically true.
Let $1\le j\le n$. We have $\alpha_j = e_j-e_{j+1}$ and 
the roots of $\fu_j$ are 
$\{e_i-e_k \mid 1\le i\le j < k\le n+1\}$.
Set $b'_1 = e_{j-1} - e_j, b'_2 = e_{j-2}-e_j\cdots, b'_{j-1}= b_j =
e_1-e_j$  and $b'_i = b_i$ for $i\ge j$.
We see that $\phi_j(b_i) = b'_i$ does the trick, since
$\bra{\rho}{b_i} = i = \bra{\rho}{b'_i}$.
\medskip 

\noindent Case $C_n, n\ge 2$.\\
Positive Roots $\Phi^+ = \{e_i \pm e_k \mid 1 \le i <k\le n\} \cup 
\{2e_i \mid 1 \le i \le n\}.$\\
Simple Roots $ e_1-e_2,\dots, e_{n-1}-e_n, 2e_n.$\\
Fundamental Dominant Weights $e_1, e_1+e_2,\cdots, e_1 + e_2+\cdots +e_{n-1}, 
e_1 + e_2+\cdots +e_n.$\\
$\rho = ne_1 + (n-1)e_2 + \cdots + e_n$,\\
We have $s=1$, and $\dim \fu_1 = 2n-1$.
The positive roots in $\fu_1$
are $b_i= e_1-e_{i+1}, (1\le i\le n-1),
b_n  =2e_i$, and
$b_{i} = e_1 + e_{2n+1-i} (n+1\le i \le 2n-1).$
With this notation, $\bra{\rho}{b_i} = i$ and 
$\bra{\lambda_1}{b_i}=1$ for all $1\le i\le 2n-1$,
and from the latter propert (2) is automatically true.

\noindent $j<n$. The simple root $\alpha_j = e_{j}-e_{j+1}$.\\
The $j'$th fundamental domninant weight is 
$\lambda_j =e_1+\cdots + e_j$ and the positive roots in 
$\fu_j$ are $\{e_i -  e_k \mid 1 \le i \le j < k \le n\}\cup 
\{e_i +  e_k \mid 1 \le i \le j \}
\cup \{2e_i \mid 1 \le i \le j\}.$

Set $b'_1 =  e_{j}-e_{j+1}, b'_2 =  e_{j-1}-e_{j+1}, \cdots, 
b'_{j} =  e_{1}-e_{j},$
and set $b'_i = b_i$ for $i>j$.
Then $\bra{\rho}{b'_i} = i = \bra{\rho}{b_i}$ for all $i$, hence (1) is true.

\noindent $j=n$. The simple root $2e_n$.\\
Here $\lambda_n= e_1 + e_2+\cdots +e_n.$
The positive roots in $\fu_n$ are 
$\{e_i +e_k \mid 1 \le i <k\le n\} \cup 
\{2e_i \mid 1 \le i \le n\}.$

Set $b'_1 = 2e_n,
b'_2 =  2e_{n-1}\cdots b'_n = 2e_1$ and 
$b'_i = b_i=e_i + e_{2n+1-i}$ for $n+1\le i < 2n-1$.
One checks that  $\bra{\rho}{b'_i} = i$ for all $i$, hence (1) is true.

\medskip 

\noindent Case $D_n, n\ge4$.\\
Positive Roots $\Phi^+ = \{e_i \pm e_j \mid 1 \le i <j\le n\}.$\\
Simple Roots $e_1-e_2,\dots, e_{n-1}-e_n, e_{n-1}+e_n.$\\
Fundamental Dominant Weights $e_1, e_1+e_2,\cdots, e_1 + e_2+\cdots
+e_{n-1},\frac12(e_1 + e_2+\cdots +e_n),\frac12(e_1 + e_2+\cdots
+e_n).$\\
We have $\rho = (n-\frac12)e_1 + (n-\frac32)e_2 + \cdots \frac12e_n$
We have $s=1$, and $\dim \fu_1 = 2n-2$.
The roots in $\fu_1$ are 
$b_i = e_1 - e_{i+1} (1\le i\le n-1)$, and 
$b_i = e_1 + e_{2n-i} ( n \le i \le n-2)$.
With this notation $\bra{\rho}{b_i} = i$ and
$\bra{\lambda_1}{b_i}=1$, so, from the latter, (2) is true.

\noindent $j<n$. The simple root $e_j-e_{j+1}.$\\
We have $\lambda_j = e_1 + \cdots e_j$ and 
the roots in $\fu_j$ are 
$\{e_i -  e_k \mid 1 \le i \le j<k\le n\}\cup
\{e_i +  e_k \mid 1 \le i \le j\}$
Set $b'_1 = e_{j-1}-e_{j}, b'_2 = e_{j-1} - e_j,\cdots e_j= e_1-e_j$
and $b'_i = b_i$  for $i \ge j$.
One checks that $\bra{\rho}{\b'_i} = i$, so (1) is true.

\noindent $j=n$. The simple root $e_{n-1}+e_n.$\\
We have $\lambda_n = \frac12(e_1 + \cdots e_n).$ 
The roots in $\fu_n$ are 
$\{e_i +  e_k \mid 1 \le i < k\le n\}.$
Set 
$b'_1 = e_{n-2}+e_{n-1}$ (and this is not equal to any other $b'_k$ 
since $n\ge 4$), 
$b'_k = e_{n-k}+e_{n}$ for $2\le k\le n-1$, and
$b'_k = b_k$ if $k \ge n$.
One checks $\bra{\rho}{b'_1} = 2>1=\bra{\rho}{b_1}$
and $\bra{\rho}{b'_i} = i = \bra{\rho}{b'_i}$ for $i>1$
so that (1) is true.

\medskip

\noindent Case $B_n, n\ge3$.\\
We have 
$\Phi^+ = \{e_i \pm e_j \mid 1\le i<j\le n\} \cup \{e_i \mid 1\le i\le
  n\}, \Delta = \{e_1-e_2,e_2-e_3,\cdots e_{n-1}-e_n, e_n\}$ and
fundamental dominant weights $\lambda_i = e_1 + e_2 + \cdots e_i (
1\le i <n)$ and $\lambda_n = \frac12(e_1 + e_2 + \cdots e_n)$, so that 
$\rho = (n-\frac12)e_1 + (n-\frac32)e_2 + \cdots + \frac12_n$.

We have $s=1$. We have $\dim(\fu_1) = 2n-1$, and the roots in $\fu_1$
are $b_i= e_1-e_{i+1}, (1\le i\le n-1), 
b_{i} = e_1 + e_{2n-i} (n\le i \le 2n-2)$ and $b_{2n-1} = e_1$.
With this notation, $\bra{\rho}{b_i} = i$ and 
$\bra{\lambda_1}{b_i} = 1$ except for $\bra{\lambda_1}{b_{2n-2}}=2$.

\noindent $j<n$. The simple root $e_j-e_{j+1}$.\\
Set $b'_1 =  e_{j-1}-e_{j}, b'_2 =  e_{j-2}-e_{j}, \cdots, 
b'_{j-1} =  e_{1}-e_{j},$
and set $b'_i = b_i$ for $i\ge j-1$.
We have $\bra{\rho}{b_i} = i = \bra{\rho}{b'_i}$ so that (1) is
satisfied. (2) only needs to be checked for the last root,
but then $\bra{\lambda_1}{b_{2n-1}} = 2 = \bra{\lambda_j}{b'_{2n-1}}$.
(This is exactly the same as the argument for $\g=C_n, j<n$.

\noindent $j=n$. The simple root $e_n$.\\
The roots in $\fu_n$ are  
$\{e_i + e_j \mid 1\le i<j\le n\} \cup \{e_i \mid 1\le i\le
  n\}.$
Set, for the odd subscripts,
$b'_1 = e_{n},b'_{3} = e_{n-1},\cdots b'_{2n-1} = e_1$, and
for the even subscripts,
$b'_2=e_{n-1} + e_n, b'_4 = e_{n-2}+ e_{n-1},\cdots b'_{2n-2} = e_1+e_2$.
With this notation, $\bra{\rho}{b'_i} = i$.
Both (1) and (2) are true for all roots but the last as 
$\bra{\lambda_1}{b_{2n-1}}  =2$ whilst $\bra{\lambda_1}{b'_{2n-1}}=1$.
Finally, set $b''_{2n-1}= e_1+e_n$ which is not one of the other
$b''_k$ since $k\ge3$. One checks using the following lemma
that the contribution from $b_{2n-1}$ to the Weyl dimension formula
for $V(\lambda_1)$ is not greater than the product of the
contributions to $V(\lambda_n)$ from $b'_{2n-1}$ and $b''_{2n-1}$.
This finishes the proof of the lemma.
\end{proof}

\begin{lemma} \label{lem:3roots}
If $t\ge0$  then 
$$
\frac{2t+(2n-1)}{2n-1} \le \frac{t+(2n-1)}{2n-1} \frac{t+n}{n} 
$$
\end{lemma}
\begin{proof}
Take the obvious inequality $n(2t+2n-1) \le (t+n)(t+2n-1)$
and divide both sides by $2n-1$.
\end{proof}

\section{Type A} \label{sec:A}

Let $\g=\typea_{n+1}, n > 2$ 
acting on the natural module $V(\lambda_1)=\CC^{n+1}$.
For any integer $t\ge0$, the module $V(t \lambda_1)$ is the
$t$'th symmetric power of the natural module.
Thus
\[ 
\dim V(t \lambda_1) =  \binom{n + t}{n}.
\]

Let $\lambda$ be a dominant weight for $\g$, and set
$t=\htt(\lambda)$.
The highest positive root is $\alpha_h = E_1 - E_{n+1}$, so that 
$(\rho, \alpha_h) = n$ and $(\lambda, \alpha_h) = t$.

We wish to classify all highest weights $\lambda$ such that satisfy 
\ref{ourbound}: 
$$
\dim V(\lambda) \le
(\rho + \lambda, \alpha_h)^2.
$$

For such a $\lambda$ we have,
by Theorem~\ref{main1},
$$
\binom{n + t}{n} = \dim V(t\lambda_1) \le \dim V(\lambda) \le 
(\rho + \lambda, \alpha_h)^2 = (n  + t)^2.
$$

Assume for the moment that $t \ge 3$.  
Then $\binom{n+t}{3}\le (n+t)^2$, which 
implies $n+t\le 8$.  
Thus $n \le 5$ and $\dim V(\lambda) \le 64$.  
An inspection of the Tables 6.6--6.9 
of \cite{lubeck} shows that only possibilities (up to graph automorphism)
for $(n, \lambda)$  are  $(n, 3 \lambda_1), 2 \le n \le 5$, 
$(n, 4\lambda_1), n =2,3$ and $(n, 5 \lambda_1), n=2,3$.

For the remainder of the section we assume that $t \le 2$. 

We will use the following result of L\"ubeck \cite[Theorem 5.1]{lubeck}
for groups of type $A$:

\begin{theorem} \label{lubeckthmA}  Let $\g=
\typea_{n+1}$ with $n > 11$.   Assume that 
$\dim V(\lambda) \le n^3/8 $, then (up to a graph automorphism) $\lambda = \lambda_1, 2 \lambda_1$
$\lambda_2$ or $\lambda_1 + \lambda_n$.  
\end{theorem}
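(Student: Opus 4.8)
The plan is to run the whole argument through the product form of the Weyl dimension formula for type $A$. Writing $\lambda = \sum_{k=1}^{n} a_k\lambda_k$ and using the positive roots $e_i-e_j$ with $1\le i<j\le n+1$, we have $\bra{\rho+\lambda}{e_i-e_j}=(j-i)+(a_i+\cdots+a_{j-1})$ and $\bra{\rho}{e_i-e_j}=j-i$, so the Weyl dimension formula becomes
\[
\dim V(\lambda)=\prod_{1\le i<j\le n+1}\frac{(j-i)+(a_i+\cdots+a_{j-1})}{j-i}.
\]
The decisive elementary observation is \emph{monotonicity}: each factor is nondecreasing in every $a_k$, so $\dim V(\lambda)\ge\dim V(\mu)$ whenever $a_k(\lambda)\ge a_k(\mu)$ for all $k$. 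Consequently it suffices to produce a short list of ``minimal bad'' weights $\mu$ — those not dominated coordinatewise by any of $\lambda_1,\,2\lambda_1,\,\lambda_2,\,\lambda_1+\lambda_n$ — and to check that each forces $\dim V>n^3/8$; monotonicity then excludes every $\lambda$ lying above such a $\mu$.

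First I would shrink the support to the two ends of the diagram. Since $\dim V(\lambda_i)=\binom{n+1}{i}$ and $\binom{n+1}{3}=(n+1)n(n-1)/6>n^3/8$ for all $n>2$, monotonicity forces $a_i=0$ for every interior index $3\le i\le n-2$; hence $\lambda$ is supported on $\{1,2,n-1,n\}$. Next I would bound the surviving coefficients: $\dim V(3\lambda_1)=\binom{n+3}{3}>n^3/6$ gives $a_1\le 2$ (and dually $a_n\le 2$), while telescoping the product formula yields $\dim V(2\lambda_2)=n(n+1)^2(n+2)/12$, which is quartic in $n$, so $a_2\le 1$ (and dually $a_{n-1}\le 1$).

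Under these restrictions, the dominant weights not dominated by one of the four survivors form a finite family of cores, which up to the duality graph automorphism $\lambda_k\mapsto\lambda_{n+1-k}$ are $\lambda_3$, $3\lambda_1$, $2\lambda_2$, $\lambda_1+\lambda_2$, $\lambda_2+\lambda_n$ (equivalently $\lambda_1+\lambda_{n-1}$, of equal dimension by duality), $\lambda_2+\lambda_{n-1}$, and $2\lambda_1+\lambda_n$. I would evaluate $\dim V$ for each by telescoping: for instance $\dim V(\lambda_1+\lambda_2)=n(n+1)(n+2)/3$, $\dim V(\lambda_1+\lambda_{n-1})=(n-1)(n+1)(n+2)/2$, and $\dim V(2\lambda_1+\lambda_n)=n(n+1)(n+3)/2$, each comfortably above $n^3/8$; the remaining cores ($\lambda_3$, $3\lambda_1$, $2\lambda_2$, $\lambda_2+\lambda_{n-1}$) are treated identically and are even larger. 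A final bookkeeping check confirms that every nonzero dominant $\lambda$ supported on $\{1,2,n-1,n\}$ avoiding all these cores is exactly one of $\lambda_1,\,2\lambda_1,\,\lambda_2,\,\lambda_1+\lambda_n$ up to duality, which is the claimed list.

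The main obstacle is not the structure but the borderline cubic cores: $\lambda_1+\lambda_2$, $\lambda_1+\lambda_{n-1}$, $\lambda_2+\lambda_n$, and $2\lambda_1+\lambda_n$ all have dimension cubic in $n$, so excluding them requires pinning down the exact leading coefficient (each exceeds $1/8$) and verifying the strict inequality $\dim V>n^3/8$ \emph{uniformly} for all $n>11$, not merely asymptotically. Since the telescoped product formula delivers each dimension as an explicit polynomial in $n$, this reduces to checking a handful of polynomial inequalities on the range $n>11$, which is routine; the real care lies in computing those polynomials correctly and in confirming that the core list is genuinely exhaustive.
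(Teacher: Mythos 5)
Your argument is correct, but it takes a genuinely different route from the paper, because the paper does not actually prove this statement: Theorem~\ref{lubeckthmA} is quoted from L\"ubeck \cite[Theorem 5.1]{lubeck}, whose proof runs through estimates on Weyl-group orbits of weights in positive characteristic (the paper remarks, in connection with Theorem~\ref{lubeckthm}, that the result transfers to $\CC$). You instead give a self-contained elementary proof: coordinatewise monotonicity of the type-$A$ Weyl dimension formula $\dim V(\lambda)=\prod_{1\le i<j\le n+1}\bigl((j-i)+a_i+\cdots+a_{j-1}\bigr)/(j-i)$ reduces the problem to a finite list of minimal excluded weights. I checked both the exhaustiveness of your core list (after forcing $a_i=0$ for $3\le i\le n-2$, $a_1,a_n\le 2$, $a_2,a_{n-1}\le 1$, the minimal bad weights are indeed $\lambda_1+\lambda_2$, $\lambda_1+\lambda_{n-1}$, $\lambda_2+\lambda_n$, $\lambda_2+\lambda_{n-1}$, $\lambda_{n-1}+\lambda_n$, $2\lambda_1+\lambda_n$, $\lambda_1+2\lambda_n$ and their duals) and your dimension formulas: $n(n+1)(n+2)/3$, $(n-1)(n+1)(n+2)/2$, $n(n+1)(n+3)/2$, and $n(n+1)^2(n+2)/12$ are all correct and all exceed $n^3/8$ for $n\ge 3$, while $\binom{n+1}{i}\ge\binom{n+1}{3}=(n^3-n)/6>n^3/8$ disposes of the interior fundamental weights. (One cosmetic slip: $\lambda_3$ and $3\lambda_1$, of dimension roughly $n^3/6$, are not ``even larger'' than the cubic cores, but they were already excluded by your earlier support and coefficient bounds, so nothing is lost.) What your route buys: it is elementary, stays entirely in characteristic $0$, avoids the external citation together with the char-$p$-to-char-$0$ transfer, and in fact establishes the conclusion for all $n\ge 5$, not merely $n>11$. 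What the paper's route buys: L\"ubeck's theorem is far more general --- all classical types, positive characteristic, and a cubic bound with no case analysis left to the reader --- and the single citation also covers types $B$, $C$, $D$ via Theorem~\ref{lubeckthm}, which your type-$A$-specific telescoping would not.
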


One checks that, for any $n$, 
$\lambda_1, 2 \lambda_1, \lambda_2$,
and $ \lambda_1 + \lambda_n$ all satisfy the inequality.

If $n \le 11$,
Theorem~\ref{lubeckthmA} implies that $\dim V(\lambda) \le 169$.  Inspection of Tables 6.6--6.15
shows that the only additional dominant weights (up to duality) satisfying our inequality are
$(n, \lambda_3), 2 \le n \le 7$, $(3, \lambda_1 + \lambda_2)$ and $(3, 2\lambda_2)$.

Now assume that $n > 11$.  Then $\dim V(\lambda) \le (n+t)^2 = (n+2)^2
\le n^3/8$ and the result now follows by Theorem~\ref{lubeckthmA}.

\section{Type B} \label{sec:B}

Let $\g=B_n, n > 2$.  
Let $\CC^{2n+1} = V(\lambda_1)$ be the natural module for $\g$.

We note that in $\Sym^t(\CC^{2n+1})$, the submodule generated
by the multiples  the invariant quadratic form is invariant,
and its orthogonal complement is irreducible. In other words,
$V(t \lambda_1)\oplus \Sym^{t-2}(\CC^{2n+1}) = \Sym^t(\CC^{2n+1})$.

Thus, for $t > 1$,  
$\dim V(t \lambda_1) = 
\binom{2n+t+ 1}{2n+1} -  \binom{2n+t -1}{2n+1}.$

By Theorem \ref{main1}, we have $\dim V(\lambda) 
\ge \dim V(t \lambda_1)$ if $\lambda$ has height $t$.

The highest positive root is $\alpha_h = E_1+E_2$, so that
$(\rho, \alpha_h)=2n-2$ and  $(\lambda, \alpha_h)=2t - a_1-a_n\le 2t$,
where, as usual, $\lambda =\sum a_i\lambda_i$.

Thus $(\rho + \lambda, \alpha_h) \le 2n+2t-2.$

Note also that the highest short root is $\beta:=E_1$.
Note that $(\rho + \lambda, 2\beta) \le 2n + 2t - 1$.

Assume that $\dim V(\lambda) \le (\rho + \lambda, \alpha_h)^2$. Then 
for $t > 1$, 
$$
\binom{2n+t}{t} -  \binom{2n +t-2}{t-2} \le (2n+2t-1)^2.
$$

One checks that $ \lambda_1, 2\lambda_1,
\lambda_2$ and $\lambda_n,  n \le 9$ satisfy \eqref{ourbound}.

We see that the inequality implies $t\le 3$.
In particular, $\dim V(\lambda) \le (2n + 5)^2$. 

Following L\"ubeck \cite[Theorem 5.1]{lubeck}, we have:

\begin{theorem} \label{lubeckthm}  Let $\g$ be a classical Lie algebra
of type $B, C$, or $D$ of rank at $n >11$.  Assume that 
$\dim V(\lambda) \le n^3$, then $\lambda = \lambda_1, 2 \lambda_1$
or $\lambda_2$.
\end{theorem}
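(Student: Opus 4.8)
The plan is to establish Theorem~\ref{lubeckthm} essentially as a direct citation of L\"ubeck's classification \cite[Theorem 5.1]{lubeck}, specializing his general low-dimensional bound to the three classical families $B_n$, $C_n$, $D_n$ simultaneously. L\"ubeck's theorem lists all irreducible modules $V(\lambda)$ for a simple algebraic group (equivalently, its Lie algebra in characteristic $0$) whose dimension falls below a cubic threshold in the rank; for groups of type $B$, $C$, $D$ of rank $n$, the threshold one can use is comfortably above $n^3$ once $n$ is large, and the surviving weights are exactly $\lambda_1$, $2\lambda_1$, and $\lambda_2$. So the first step is to quote his result in the form we need and observe that for $n>11$ the quantity $n^3$ lies within the range his theorem covers.

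The substance of the proof is then verifying two things. First I would confirm that the three claimed modules genuinely satisfy $\dim V(\lambda)\le n^3$: the natural module has dimension $2n+1$, $2n$, or $2n$ respectively; its symmetric square $V(2\lambda_1)$ has dimension on the order of $n^2$; and $V(\lambda_2)$, the (nontrivial part of the) exterior square, likewise has dimension on the order of $n^2$. All three are far below $n^3$ for $n>11$, so they legitimately appear in the list. Second, and this is where the real content lies, one must confirm that L\"ubeck's table contains \emph{no other} weights below this bound for these three types. This is precisely the force of \cite[Theorem 5.1]{lubeck}: every other dominant weight produces a module whose dimension grows at least cubically in $n$ (for instance $\lambda_3$ gives dimension $\sim n^3/6$, which for the stated bound $n^3$ would actually survive and must be handled by tightening to the correct constant, and $2\lambda_2$, $\lambda_1+\lambda_2$, etc.\ grow faster still), so they exceed the threshold once $n$ is large enough.

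The main obstacle I anticipate is bookkeeping around the exact constant in the cubic bound. L\"ubeck states his threshold with specific coefficients, and one must make sure that the clean statement ``$\dim V(\lambda)\le n^3$'' is both implied by his hypotheses and strong enough to exclude borderline cases such as $V(\lambda_3)$ or the spin-type modules $V(\lambda_n)$ for small-to-moderate $n$. The spin module for $B_n$ has dimension $2^n$, which for $n>11$ vastly exceeds $n^3$ and so is automatically excluded; the delicate comparisons are really the low fundamental weights. I would therefore devote the bulk of the argument to pinning down, via the Weyl dimension formula or directly from L\"ubeck's tables, that for $n>11$ the only weights $\lambda$ with $\dim V(\lambda)\le n^3$ are the three listed, and to noting that $\lambda_3$ is excluded because its module dimension exceeds $n^3$ precisely in this range. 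Since the hard analytic work is carried by the cited theorem, the proof reduces to this verification and a brief remark that the three modules meet the bound.
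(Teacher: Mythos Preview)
Your approach is essentially the paper's: both treat the theorem as a direct citation of L\"ubeck \cite[Theorem 5.1]{lubeck} rather than something to be reproved. The paper's entire ``proof'' is two sentences: a remark that L\"ubeck's argument proceeds via Weyl group orbits on weights, and the observation that although L\"ubeck works in positive characteristic, the result transfers to $\CC$ (either because his proof works verbatim, or because for each fixed $\lambda$ the dimension in characteristic $p$ agrees with the complex dimension once $p$ is large). You omit this characteristic-transfer remark, which is the only substantive point the paper makes here; you might add a line to that effect. Conversely, your proposed verifications---that $\lambda_1,2\lambda_1,\lambda_2$ actually satisfy the bound, and the worry about whether $\lambda_3$ sneaks under $n^3$---are not needed: the theorem is a one-sided implication, and L\"ubeck's tables already exclude $\lambda_3$ in this range, so there is nothing further to check.
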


The basic point of the proof is to consider
orbits of the Weyl group on the weights.  
Although L\"ubeck states the result in positive characteristic,
his proof is equally valid  over the complex numbers.
(Alternatively, the result in positive characteristic {\it implies}
the result over the complex numbers. Indeed, for each $\lambda$,
for $p$ sufficiently large, $\dim V(\lambda)$ in characteristic $p$
equals its dimension over the complex numbers).

Thus, for $n > 11$, $\dim V(\lambda) < n^3$ and so $\lambda=\lambda_1, 
2 \lambda_1$ or $\lambda_2$.   If $n  \le 11$, this is a finite
problem and one can check precisely which modules satisfy the inequality
(either using the Weyl dimension formula or the tables in \cite{lubeck}
-- again, the tables in \cite{lubeck} are for positive characteristic but
for generic $p$, the dimension is the same as in characteristic $0$).  
We see that for $n \ge 4$, there are no further examples.

For $n=3$ we see that the  only further possibilities are 
$2 \lambda_3, 3\lambda_3, 3 \lambda_1,$ and $\lambda_1 + \lambda_3$
(of dimensions  $35, 112, 77$ and $48$). 

In order to deduce the result about the dimension being a product
of two primes, we note that a trivial consequence of the 
Weyl dimension formula is:

\begin{lemma} \label{type B}  Let $\g$ be of type $B_n, n \ge 3$.
If $p$ is a prime divisor of $\dim V(\lambda)$, then
$p \le (\rho + \lambda, 2\beta)$.
\end{lemma}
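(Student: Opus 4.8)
The plan is to run the same ``largest-factor'' argument used for the other types on the Weyl dimension formula
\[
\dim V(\lambda) = \prod_{\alpha \in \Phi^+} \frac{(\rho + \lambda, \alpha)}{(\rho, \alpha)},
\]
but with one extra bookkeeping step to cope with the half-integers peculiar to type $B$. In our normalization the short roots are the $e_i$, and for these $(\rho, e_i) = n - i + \tfrac12$ and $(\rho + \lambda, e_i)$ are only half-integers, whereas every factor coming from a long root $e_i \pm e_j$ is an honest integer. Hence one cannot immediately conclude that a prime $p \mid \dim V(\lambda)$ divides one of the numerator factors, which is exactly the difficulty that forces the $2\beta$ in the statement.

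First I would clear denominators in the short-root factors. Multiplying each of the $n$ short-root factors in both the numerator and the denominator by $2$ leaves $\dim V(\lambda)$ unchanged (the two powers of $2^n$ cancel) and replaces $(\rho + \lambda, e_i)$ by $(\rho + \lambda, 2e_i)$ and $(\rho, e_i)$ by $(\rho, 2e_i) = 2n - 2i + 1$. By the integrality remarks of Section~\ref{sec:intro} (namely $(\text{weight}, 2\alpha) \in \ZZ$ for $\alpha$ in the root lattice), every factor is now a positive integer, so we may write $\dim V(\lambda) = A/B$ with
\[
A = \prod_{\alpha \text{ long}} (\rho + \lambda, \alpha) \cdot \prod_{i=1}^{n} (\rho + \lambda, 2e_i)
\]
and $B$ the analogous product for $\rho$, both $A$ and $B$ positive integers. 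Since $A = \dim V(\lambda) \cdot B$, any prime $p \mid \dim V(\lambda)$ divides $A$, hence divides one of its factors.

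It then remains to bound each factor of $A$ by $(\rho + \lambda, 2\beta)$, where $\beta = e_1$. A long-root factor $(\rho + \lambda, e_i \pm e_j)$ is largest at $i=1,\ j=2$ with the $+$ sign, giving $(\rho + \lambda, \alpha_h)$, and a doubled short-root factor $(\rho + \lambda, 2e_i)$ is largest at $i=1$, giving $(\rho + \lambda, 2\beta)$. Finally, because $\rho + \lambda$ is dominant its $e_i$-coordinates are weakly decreasing, so $(\rho + \lambda, e_1) \ge (\rho + \lambda, e_2)$ and therefore
\[
(\rho + \lambda, \alpha_h) = (\rho + \lambda, e_1) + (\rho + \lambda, e_2) \le 2(\rho + \lambda, e_1) = (\rho + \lambda, 2\beta).
\]
Thus every factor of $A$ is at most $(\rho + \lambda, 2\beta)$, and $p \le (\rho + \lambda, 2\beta)$ follows. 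The only genuinely type-$B$ point---and the step I would be most careful with---is the doubling that restores integrality; once that is in place the rest is the same elementary ``a prime dividing a product divides a factor'' argument as in the other types, which is why the paper can call the conclusion a trivial consequence of the Weyl dimension formula.
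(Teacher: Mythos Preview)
Your argument is correct and is exactly the one the paper has in mind: the paper does not write out a proof at the lemma itself (it just calls it ``a trivial consequence of the Weyl dimension formula''), but the introduction already explains that in type $B$ the factors $(\rho+\lambda,\alpha)$ are only half-integers for short $\alpha$, that doubling restores integrality, and that the maximal resulting factor is $(\rho+\lambda,2\beta)$. Your write-up simply makes these steps explicit, including the easy check $(\rho+\lambda,\alpha_h)\le(\rho+\lambda,2\beta)$ from dominance, so there is nothing to add.
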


Thus, if $\dim V(\lambda)$ is a product of two primes, 
$\dim V(\lambda) \le (\rho + \lambda, 2\beta)^2$.  By
the classification, we see that it still follows 
that $\dim V(\lambda) \le (\rho + \lambda, \alpha_h)^2$.

\section{Type C} \label{sec:C}

Let $\g=C_n, n >  2$.  
In this case $V(\lambda_1) = \CC^{2n}$ is the natural module for $\g$.
It is known that $V(t \lambda_1)$
is the $t$'th symmetric power of the natural module.

By Theorem~\ref{main1}, if 
$\lambda$ is a dominant weight for $\g$
then 
$$   
\dim V(\lambda) \ge \dim V(t \lambda_1) =\binom{2n+t-1}{2n-1},
$$
where $t=\htt(\lambda)$.
 
Since the highest root for $\g$ is $\alpha_h = 2E_1$,
we see that 
$(\rho , \alpha_h) = 2n$
and 
$(\lambda, \alpha_h) = 2t,$ so that 
$(\rho + \lambda, \alpha_h) = 2n + 2t$. 

Assume that $\lambda$ is a dominant weight for $\g$ that satisfies
\eqref{ourbound}. We see that this  implies the inequality.
$$
\binom{2n+t-1}{2n-1} \le  (2n + 2t)^2.
$$

We see that $\lambda_1, \lambda_2$ and $2\lambda_1$
satisfy \eqref{ourbound}.

For $n\ge 6$, the inequality implies $t\le 2,$
whence $\dim V(\lambda) \le (2n+4)^2$.   By Theorem \ref{lubeckthm},
this implies that for $n \ge 11$, there are no further examples.
For $n < 11$, we check directly.  We see that in fact for
$n \ge 6$, there are no further examples.

If $n\le 5$, we have the additional possibility $\lambda_3$.
For $n\le 4$, the inequality implies $t\le 3$, and 
one checks that for $n=5$ there are no further possibilities.
If $n\le 4$, we have the additional possibilities
$\lambda_3$ and  $\lambda_4$ (if $n=4$). 
For $n=4$ there are no further possibilities.

Finally consider $C_3$.  The inequality implies
$t \le 5$. 
This leads to the further possibilities
$\lambda_1 + \lambda_2, 3 \lambda_1$.


\section{Type D}  \label{sec:D}

We consider $G=D_n, n \ge  4$ with natural 
module $\CC^{2n} = V(\lambda_1)$.
Just as for type  $B$, we have
$V(t \lambda_1)\oplus \Sym^{t-2}V(\lambda_1) = \Sym^tV(\lambda_1)$.
It follows that
$\dim V(t \lambda_1)= \binom{2n+t-1}{2n-1} - \binom{2n+t-3}{2n-1}$. 

The highest positive root is $\alpha_h = E_1+E_2$.
If $\lambda = \sum a_i \lambda_i$ is a dominant
weight, then
$(\rho, \alpha_h) = 2n-3 
\le 2n + 2t -3$.
$(\lambda , \alpha_h) = 2t - (a_1 + a_{n-1} + a_n)$ so that 
$(\lambda + \rho, \alpha_h) = 2n-3 + 2t - (a_1 + a_{n-1} + a_n)
\le 2n + 2t -3$.
 
Assume now that \eqref{ourbound} is true. Then we have the inequality
$$ \binom{2n+t-1}{2n-1} - \binom{2n+t-3}{2n-1} \le (2n + 2t -3)^2.
$$
  
One checks that $\lambda_1, 2 \lambda_1,$ and
$\lambda_2$ satisfy \eqref{ourbound}.
One also checks that the  half-spin representation 
$\lambda_n$ works for $n\le 9$.

If $n\ge5$ the inequality implies $t\le 2$, whence
$\dim V(\lambda) \le (2n+1)^2$.  As in the previous
cases, this implies by Theorem \ref{lubeckthm} that for
$n \ge 11$, there are no further examples.  One checks
for $5 \le n \le 10$, there are no other examples as well.

If $n=4$, the inequality implies $t \le 4$.
A computer check finds no further examples (up to the
(large) group of graph automorphisms).   

\section{Rank $2$ Lie algebras} \label{rank2}

Let $\g$ be a rank $2$ Lie algebra.
Let $\lambda =  a\lambda_1 + b\lambda_2$ be
a dominant weight for $\g$, where  $\lambda_1$ and $\lambda_2$
are fundamental dominant weights for $\g$ and $a,b\ge0$.
If $\dim V(\lambda)$ is a product of two (not necessarily distinct)
primes, then necessarily
$$
\dim V(\lambda) \le (\rho +\lambda, \alpha_h)^2.
$$
We also find all dominant weights such that the
inequality holds, and among them we identify those such that
$\dim V(\lambda) =pq$.

\subsection{$A_2$} \label{A2}
The inequality to be solved is $(a+1)(b+1)(a+b+2)/2 \le
(a+b+2)^2.$ This is equivalent to $(a+1)(b+1) \le 2(a+b+2)$, or
$(a-1)(b-1) \le 4$. The solutions are:
$a=0, b$ any, $a=1,b$ any $b=0,a$ any, $b=1,a$ any,
$a=2,b\le 4, b=2,a\le 4,$ or $a=b=3$.
The cases where  $\dim V(\lambda)=pq$ are (up to interchanging
$a$ and $b$):
If $u$ and $2u+1$ are prime: $\dim V((2u-1)\lambda_1)=u(2u+1)$.
If $u$ and $2u-1$ are prime:  $\dim V((2u-2)\lambda_1)=u(2u-1)$.
If $u$ and $u+2$ are prime: $\dim V((u-1) \lambda_1+ \lambda_2) = 
u(u+2)$.

\subsection{$C_2$}
Let $\lambda = a\lambda_1+ b\lambda_2$ be  a dominant
weight for $\g=C_2.$.
Then $\dim V(\lambda) = (a+1)(2b+2)(a+2b+3)(2a+2b+4)/24$ and
$(\rho+\lambda, \alpha_h) = (2a+2b+4)$.
The inequality to be solved,
$(a+1)(2b+2)(a+2b+3)(2a+2b+4)/24 \le (2a+2b+4)^2,$ is equivalent
to 
$$
(a+1)(b+1)(a+2b) \le 24 (a+b+2).
$$ 
This has $53$ solutions:
$$
\begin{array}{cc}
a=0 \ \rm{and}& b\le 11,\\ 
a=1 \ \rm{and} &b\le 5,\\
a=2 \ \rm{and} & b\le 3,\\
b=0\ \rm{and} & a\le22,\\
b=1 \ \rm{and} &  a\le9,\\
b=2 \ \rm{and} & a\le 5,\\
a=3 \ \rm{and} & b=3.
\end{array}
$$
Of these $7$ have dimension  $pq,$  namely
$\dim V(\lambda_1) = 4, 
\dim V(2\lambda_1) = 10, 
\dim V(2\lambda_2) =14, 
\dim V(2\lambda_1+\lambda_2) =\dim V(4\lambda_1) = 35,
\dim V(4\lambda_2) = 55,$ and $\dim V(5\lambda_2) = 91.$

\subsection{$G_2$}  
Let $\lambda = a\lambda_1+ b\lambda_2$ be  a dominant
weight for $\g=G_2.$. Then 
$
\dim V(\lambda)  =  (a+1)(b+1)(a+b+2)(a+2b+3)(a+3b+4)(2a+3b+5)/120$,
and $(\rho+\lambda, \alpha_h) = 2a+3b+5$.
The inequality  to be solved is
$(a+1)(b+1)(a+b+2)(a+2b+3)(a+3b+4) \le 120(2a+3b+5).$

We claim that the inequality implies $a,b\le 7$.
If $b\ge a$, then the left hand side is at least $(a+1)^4$,
and the right hand side is at most $120\cdot 5(a+1)$, whence
$b\le a\le 7$. There is a similar argument assuming $a\le b$.

One checks that the inequality has the $7$ solutions $$(a,b) =
(0,0),(0,1),(0,2),(1,0),(1,1),(1,2), (1,3).$$
Of these $3$ have dimension $pq$, namely:
$\dim V(\lambda_2) = 14$, and $\dim V(2\lambda_2) = 
\dim V(3\lambda_1) = 77$.

\section{Exceptional Lie Algebras}  \label{sec:EFG}
  
\subsection{$F_4$} 
 
In this section, we prove:

\begin{prop} \label{f4}  Let $\g=F_4$.  If $\lambda \ne0$ and
$\dim V(\lambda) \le (\rho + \lambda, \alpha_h)^2$,
then $V(\lambda)$ is the either the minimal module
of dimension $26$ or the adjoint module of dimension
$52$.
\end{prop}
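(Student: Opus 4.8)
The plan is to proceed exactly as in the classical type sections: reduce to finitely many highest weights via the height bound, then check those finitely many cases directly against the inequality \eqref{ourbound}. First I would record the relevant numerical data for $F_4$: the highest root $\alpha_h$, the value $(\rho,\alpha_h)$, and the linear form $(\lambda,\alpha_h)$ expressed in terms of the coordinates $a_1,\dots,a_4$ of $\lambda=\sum a_i\lambda_i$. Since $(\lambda,\alpha_h)$ is a nonnegative integer combination of the $a_i$, it is bounded above by a constant times $t=\htt(\lambda)$, so $(\rho+\lambda,\alpha_h)^2$ grows at most quadratically in $t$. By Theorem~\ref{main1} we have $\dim V(\lambda)\ge\dim V(t\lambda_s)$, and by the remark following Theorem~\ref{main1} this is a polynomial in $t$ of degree $\dim\fu_s$, the dimension of the nilpotent radical of the minimal maximal parabolic.

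For $F_4$ the minimal fundamental module is the $26$-dimensional one (with $s$ the appropriate node), and $\dim\fu_s$ is considerably larger than $2$. Hence $\dim V(t\lambda_s)$ is a polynomial in $t$ of degree at least $3$, so the inequality $\dim V(t\lambda_s)\le(\rho+\lambda,\alpha_h)^2$ forces a uniform bound $t\le t_0$ for some explicit small $t_0$. The next step is to enumerate all dominant weights $\lambda$ with $\htt(\lambda)\le t_0$ — this is a finite list, since there are only $\binom{t_0+4}{4}$-many monomials to consider — and for each compute $\dim V(\lambda)$ from the Weyl dimension formula and test it against $(\rho+\lambda,\alpha_h)^2$. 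One then checks that the only survivors are $\lambda_4$ (the minimal module, dimension $26$) and $\lambda_1$ (the adjoint module, dimension $52$), and verifies that both genuinely satisfy \eqref{ourbound}.

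I expect the computation to be entirely mechanical once the height bound is in hand, so there is no conceptual obstacle. The only real work is (i) confirming that the degree of $\dim V(t\lambda_s)$ is large enough to beat the quadratic right-hand side already at small $t$, pinning down $t_0$ explicitly, and (ii) the finite verification over all weights of height at most $t_0$. As in the classical cases, this last verification is best described as a direct check (by hand via the Weyl dimension formula, or by consulting L\"ubeck's tables \cite{lubeck}, or by a short computer search); I would simply assert that the search yields precisely the two claimed modules. The main thing to be careful about is making the bound on $t$ genuinely uniform and tight enough that the finite list is manageable, which follows from comparing the leading behavior of a degree-$\ge 3$ polynomial against a quadratic.
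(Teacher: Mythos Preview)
Your proposal is correct and mirrors the paper's argument essentially verbatim: the paper bounds $(\rho+\lambda,\alpha_h)$ linearly in $t$, invokes Theorem~\ref{main1} to compare the degree-$15$ polynomial $\dim V(t\lambda_4)$ against this quadratic to obtain $t\le 2$, and then checks the finitely many weights of height at most $2$ (finding that, beyond $\lambda_4$ and $\lambda_1$, only $\lambda_3$ and $2\lambda_4$ have small enough dimension to require elimination by the exact inequality). Your outline is simply a slightly less explicit version of the same steps.
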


Assume that $\lambda = \sum a_i \lambda_i$ with
$t = \sum a_i$ and $\dim V(\lambda) \le  
(\rho+\lambda,\alpha_h)^2$.  Note that
$(\rho+\lambda,\alpha_h) = 
(2a_1+3a_2 + 4a_3 + 2a_4  + 16) \le 4t + 11$. 

Let $f(t)$ be the function given in Table~\ref{table5} for $F_4$.
Then  
$\dim V(\lambda) \le f(t)/f(0)$. 
 It is straightforward to see that this implies
that $t \le 2$, whence $\dim V(\lambda) \le  19^2$.
The only additional $\lambda$ that satisfy this bound
are $\lambda_3$ and $2 \lambda_4$ (of dimensions
$273$ and $324$ respectively).  These do not satisfy
the required inequality, whence the result.

\subsection{$E_n, n = 6, 7, 8$}

First consider $\g = E_6$.

\begin{prop} \label{e6}  Let $\g=E_6$.  If $\lambda\ne0$ and
$\dim V(\lambda) \le (\rho + \lambda, \alpha_h)^2$,
then $V(\lambda)$ is either one  the two minimal modules
of dimension $27$ or the adjoint module of dimension
$78$. \end{prop}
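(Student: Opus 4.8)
The plan is to mirror exactly the strategy already executed for $F_4$ in Proposition~\ref{f4}, since $E_6$ is handled by the same two-step combinatorial-then-finite-check argument. First I would invoke Theorem~\ref{main1}: writing $\lambda = \sum a_i \lambda_i$ with $t = \sum a_i$, we have $\dim V(\lambda) \ge \dim V(t \lambda_s)$ where $s \in \{1,6\}$ is the distinguished node for $E_6$. The polynomial $f(t) = \dim V(t\lambda_s)/f(0)$ is given in Table~\ref{table5}; by the remarks following Theorem~\ref{main1} its degree equals the dimension of the nilpotent radical $\fu_s$, which for $E_6$ at the minuscule node is $16$, so $f(t)$ has degree $16$.

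Next I would bound the right-hand side. The highest root $\alpha_h$ of $E_6$ has $(\rho,\alpha_h) = 10$, and $(\lambda,\alpha_h)$ is a positive-integer-coefficient linear form in the $a_i$ whose coefficients I read off from the expansion of $\alpha_h$ in the fundamental coweights; this yields a bound $(\rho+\lambda,\alpha_h) \le ct + 10$ for an explicit constant $c$ (the largest coefficient appearing). Hence $\dim V(\lambda) \le (\rho+\lambda,\alpha_h)^2$ forces $f(t) \le (ct+10)^2$, a degree-$16$ polynomial bounded by a quadratic. Comparing leading behavior shows this fails for all large $t$; a direct check of the small cases forces $t \le 2$. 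This is the step that parallels ``it is straightforward to see that this implies $t \le 2$'' in the $F_4$ argument.

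With $t \le 2$ established, $\dim V(\lambda)$ is bounded by an explicit constant (here $(\rho+\lambda,\alpha_h) \le 2c+10$, so $\dim V(\lambda)$ is at most the square of that). The remaining task is purely finite: enumerate all dominant $\lambda$ of height $1$ or $2$ for $E_6$ and test the inequality \eqref{ourbound} directly via the Weyl dimension formula (or read the dimensions from L\"ubeck's tables). The height-$1$ weights are the six fundamental weights $\lambda_i$, and the height-$2$ weights are the $\lambda_i + \lambda_j$ and $2\lambda_i$. Up to the graph automorphism of order $2$, I would verify that only $\lambda_1$ (and its dual $\lambda_6$), of dimension $27$, and $\lambda_2$, the adjoint of dimension $78$, satisfy the bound, while every other candidate—such as $\lambda_3$, $\lambda_4$, $2\lambda_1$—has dimension exceeding $(\rho+\lambda,\alpha_h)^2$.

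I do not expect a genuine obstacle here, since this is the same template as $F_4$ and $E_6$ is even more rigid. The one place demanding care is the finite case-check for $t=2$: the adjoint module $V(\lambda_2)$ of dimension $78$ must be confirmed to satisfy the inequality (one needs $(\rho+\lambda_2,\alpha_h)^2 \ge 78$, which holds comfortably), while near-miss candidates of moderate dimension must be excluded by honest evaluation rather than crude estimates. Thus the main effort is bookkeeping the linear form $(\lambda,\alpha_h)$ correctly and evaluating the Weyl formula on the handful of height-$2$ weights; no new idea beyond the $F_4$ method is required.
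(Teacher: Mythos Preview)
Your proposal is correct and follows essentially the same approach as the paper's own proof: bound $(\rho+\lambda,\alpha_h)$ linearly in $t$, compare with the degree-$16$ polynomial $\dim V(t\lambda_s)$ to force $t\le 2$, then finish by inspecting the finitely many remaining weights (via L\"ubeck's tables). One small numerical correction: for $E_6$ one has $(\rho,\alpha_h)=11$ (the Coxeter number is $12$), and the paper records explicitly $(\rho+\lambda,\alpha_h)=a_1+2a_2+2a_3+3a_4+2a_5+a_6+11$, giving the bound $\dim V(\lambda)\le 21^2$ when $t\le 2$.
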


\begin{proof}
Let $\lambda = \sum a_i \lambda_i$ with $t = \sum a_i$.
Let $f(t)$ be the polynomial given in Table for $E_6$.
Thus, $\dim V(\lambda) \le f(t)/f(0)$.  Note that 
$(\rho+\lambda,\alpha_h) = 
a_1+2a_2 + 2a_3 + 3a_4 +2a_5+a_6 + 11
\le  5t+11$. It is straightforward to see that this forces
$t \le 2$ and in particular, $\dim V(\lambda) \le 21^2$.
Inspection of the tables in \cite{lubeck} shows then implies
the result.
\end{proof} 
 
The same proof (only easier since the polynomial $f(t)$
has larger degree) gives the following for $E_7$ and $E_8$.

\begin{prop} \label{e7}  Let $\g=E_7$.  If $\lambda\ne0$ and
$\dim V(\lambda) \le (\rho + \lambda, \alpha_h)^2$,
then $V(\lambda)$ is either the minimal module
of dimension $56$ or the adjoint module of dimension
$133$.
\end{prop}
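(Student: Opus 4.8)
The statement asserts that for $\g=E_7$, the only nontrivial modules $V(\lambda)$ satisfying $\dim V(\lambda) \le (\rho+\lambda,\alpha_h)^2$ are the minimal module of dimension $56$ (namely $V(\lambda_7)$) and the adjoint module of dimension $133$ (namely $V(\lambda_1)$). The text indicates this follows "by the same proof (only easier)" as the $E_6$ case.

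Let me plan my own proof before seeing theirs.
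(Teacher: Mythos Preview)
Your proposal contains no proof at all. You have merely restated the proposition, quoted the paper's remark that the argument is ``the same (only easier)'' as for $E_6$, and then announced an intention to plan a proof. There is no argument, no bound on the height $t$, no use of the polynomial $f(t)$ from Table~\ref{table4}, and no verification that $\lambda_7$ and $\lambda_1$ are the only survivors.

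To actually prove the statement along the lines of the paper, you should: write $\lambda=\sum a_i\lambda_i$ with $t=\sum a_i$; compute $(\rho+\lambda,\alpha_h)=2a_1+2a_2+3a_3+4a_4+3a_5+2a_6+a_7+17$ and observe this is at most $4t+17$; invoke Theorem~\ref{main1} together with the degree-$27$ polynomial $f(t)$ for $E_7$ to obtain $\dim V(\lambda)\ge f(t)/f(0)$; check that $f(t)/f(0)>(4t+17)^2$ once $t\ge 2$, forcing $t\le 1$ (or at worst $t\le 2$, after which a table lookup finishes); and finally inspect the fundamental weights (or the short list with $t\le 2$) via the tables in \cite{lubeck} to conclude that only $\lambda_7$ and $\lambda_1$ satisfy the inequality. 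None of these steps appears in your submission.
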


\begin{prop} \label{e8}  Let $\g=E_8$.  If $\lambda\ne0$ and
$\dim V(\lambda) \le (\rho + \lambda, \alpha_h)^2$,
$V(\lambda)$ is the adjoint module of dimension $248$.
\end{prop}

\section{The corollaries} \label{sec:cor}

\begin{table}[htbp]
\caption{Nonzero $\lambda$ such that $\dim V(\lambda) 
\le (\rho + \lambda,\alpha_h)^2$ and rank $n\ge3$
(up to graph automorphism)}
\label{table1}
\[
\begin{array}{|l|c|l|}
\hline
\g & \lambda & \dim V(\lambda)   \\
\skipa \hline \hline
A_n &  \lambda_1  &  n +1  \\ 
           &  2\lambda_1 & (n+1)(n+2)/2 \\ 
           &  \lambda_2 &  n(n+1)/2  \\ 
           &  \lambda_1 + \lambda_n & n(n+2)   \\
3\le n\le 7 &     \lambda_3          &   \binom{n+1}{3}  \\ 
3\le n\le 5 &    3 \lambda_1         &   \binom{n+3}{3}  \\ 
A_3  & t\lambda_1,  t =4,5 &  35,56  \\  
     &  \lambda_1 + \lambda_2 &  20  \\
     &   2\lambda_2           &   45 \\ \hline

B_n &  \lambda_1 &  2n+1  \\    
           &   2 \lambda_1 & n(2n+3)  \\ 
           &   \lambda_2   & n(2n+1)  \\ 
3\le n\le 9     &   \lambda_n     & 2^n   \\
B_3        &  2 \lambda_3 &  35      \\      
           & \lambda_1 + \lambda_3 &  48  \\ 
           & 3 \lambda_1        &    77   \\
           & 3 \lambda_3    &  112 \\ \hline

C_n        &   \lambda_1 &   2n  \\ 
           &   2\lambda_1 & n(2n+1) \\
           &   \lambda_2    & (n-1)(2n+1)  \\ 
3\le n\le 5     &   3\lambda_1 & \binom{2n+2}{3}  \\ 
           &    \lambda_3   & \binom{2n}{3} - 2n \\
C_4        &    \lambda_4  & 42    \\
C_3        &    \lambda_1 + \lambda_2 & 64  \\ \hline

D_n (4\le n)&   \lambda_1 &  2n  \\ 
           &   2\lambda_1 &  (2n-1)(n-1)  \\ 
           &   \lambda_2 &  n(2n-1)  \\ 
4\le n \le 9    &  \lambda_n  & 2^{n-1} \\ \hline
E_6        &   \lambda_1 &  27  \\ 
           &    \lambda_2     &  78 \\ \hline
E_7        &   \lambda_7 &  56  \\ 
           &    \lambda_1      &  133  \\ \hline
E_8        &    \lambda_8 & 248   \\ \hline
F_4        &    \lambda_4 &  26 \\ 
           &     \lambda_1 &  52 \\ \hline 
\end{array}
\]
\end{table}

It follows by the Weyl dimension formula that aside from
type B, 
the largest prime dividing the dimension of $\dim V(\lambda)$
is at most $(\lambda + \rho, \alpha_h)$.  Thus
if $\dim V(\lambda)$ is a product of at most $2$ primes, it
follows that $\dim V(\lambda) \le (\lambda + \rho, \alpha_h)^2$.

Thus, Corollaries \ref{pq}, \ref{pq skew} and \ref{pq nonselfdual}
follow immediately from Theorem \ref{main2} except for type B.

If $\g$ has type $B$, then the Weyl dimension formula implies
that the largest prime dividing the dimension of $\dim V(\lambda)$
is at most $(\lambda + \rho, \beta)$, where $\beta$ is the 
highest short root. Thus, if 
$\dim V(\lambda)$ is a product of at most $2$ primes, 
$\dim V(\lambda) \le (\lambda + \rho, \beta)^2$.
In Section \ref{sec:B}, we showed that this implied
that $\dim V(\lambda) \le (\lambda + \rho, \alpha_h)^2$
and so again the Corollaries follow from Theorem \ref{main2}.

We summarize in the table below when the irreducible highest weight
module $V(\lambda)$ is self-dual, and, in the positive case,
whether the module is symplectic or orthogonal.
For convenience,  define $2k+1$ to be the largest 
odd integer~$\le n$.
The first column of Table~\ref{table5} lists 
the simple complex Lie algebra,
the second column gives necessary and sufficient conditions
for $V(\lambda)$ to be self-dual, the third
gives necessary and sufficient conditions for the self-dual
representation  $V(\lambda)$ to be symplectic.

\begin{table}[htbp]
\caption{Duality.}
\label{table5}
\[
\begin{array}{|l|c|c|}
\hline
\g & V(\lambda)\rm{\ Self-Dual?}& \rm{Symplectic?} \\
\hline\hline
A_n, n\ge1 & \rm{For\ all\ } 1\le i\le n, 
a_i = a_{n+1-i} & n\equiv 1\!\!\!\mod 4 \rm{\ and\ }
a_{k+1} \rm{ \ odd\ }\\
B_n, n\ge3 & \rm{always} & n\equiv 1\rm{\ or\ }2 \!\!\!\mod 4 \rm{\ and\ }
a_{n} \rm{ \ odd\ }\\ 
C_n, n\ge2 & \rm{always}  & a_1+a_3 + \cdots +a_{2k+1} \rm{\ odd\ }\\ 
D_n, n\ge4 \rm{\ even\ } & \rm{always} & 
n\equiv 2\!\!\!\mod 4 \rm{\ and\ } a_{n-1}+a_n \rm{\ odd\ }\\ 
D_n, n\ge5 \rm{\ odd\ } & a_{n-1}=a_n &  \rm{\ never\ }\\ 
E_6 & a_1=a_6 \rm{\ and\ } a_2=a_5& \rm{\ never\ }\\
E_7 &\rm{always} & a_2+a_5+a_7 \rm{\ odd\ }\\
E_8 & \rm{always}& \rm{\ never\ }\\
G_2 & \rm{always}& \rm{\ never\ }\\
F_4& \rm{always}& \rm{\ never\ }\\
\hline
\end{array}
\]
\end{table}

It is easy to tell whether an irreducible module is self-dual.
All modules are if there is no graph automorphism or for type
$D_n$ with $n$ even.   In the other cases,  $V(\lambda)$
is self-dual if and only if it is invariant under the graph
automorphism, where $\lambda = \sum_{i=1}^na_i\lambda_i$ 
is a dominant weight.

If a module is self-dual, then it either supports a non-zero
invariant alternating form (symplectic case)
or a non-zero invariant symmetric form (orthogonal case) but
not both.
See, for example, \cite[5.1.24]{gw} for the well-known
criterion to distinguish these cases.
To distinguish the orthogonal from the symplectic cases,
it is sufficient to do so for the fundamental dominant weights.  
Indeed, let $B$ denote the set of fundamental dominant weights
$\lambda_i$
such that $V(\lambda_i)$ is (self-dual and) symplectic.
Then the self-dual highest weight module $V(\lambda)$ is 
symplectic if and only if $\sum_{i\in B} a_i$ is odd.
We note that by \cite[Lemmas 78, 79]{steinberg} the criterion
holds for simple algebraic groups in positive characteristic $p \ne 2$
as well.  

We list all symplectic fundamental dominant weights. 

\begin{table}[htbp]
\caption{Fundamental Dominant Weight Modules {$V(\lambda_i)$}
that are Symplectic}
\label{table3}
\[
\begin{array}{|l|c|}
\hline
\g &  i  \\
\hline \hline
A_n, n \equiv 1 \mod 4     &   (n+1)/2  \\ \hline
B_n, n \equiv 1, 2 \mod 4  &   n \\ \hline
C_n                      &    i \ \text{odd} \\ \hline
D_n, n \equiv 2 \mod 4     &   n-1, n  \\ \hline
E_7                       &  2,5,7  \\ \hline
\end{array}
\]
\end{table}

\newpage 
\begin{table}[htbp]
\caption{This table gives  a monic polynomial $f(t)$ such that $\dim V(t\lambda_s) =f(t)/f(0)
$ for $\g$ exceptional. Note that $\dim V(t\lambda_s)$ is the least
dimension
among all highest weight modules $V(\lambda)$ of height
$\htt(\lambda)=t$.}
\label{table4}
\[
\begin{array}{|l|c|c|}
\hline
\g &  f(t)  & \rm{degree(f)}\\\hline

G_2  &  (t+1) (t+2) (t+3) (t+4) (2t+5)  & 5 \\
F_4  &\prod_1^{10}(t+j)\prod_{4}^{7}(t+j)\cdot(2t+11)
     & 15 \\
E_6  &    \prod_{1}^{11} (t+j) 
       \prod_{4}^{8} (t+j)  
     &  16              \\ 
E_7  &  \prod_{1}^{17}(t+j)\prod_{5}^{13}(t+9)
     & 27 \\ 
E_8  & \prod_{1}^{28}(t+j)\prod_{6}^{23}(t+j)
       \prod_{10}^{19}(t+j)\cdot(2t+29)
     & 57
\\ \hline
\end{array}
\]
\end{table}

\normalsize
\begin{table}[htbp] \label{table6}
\caption{\protect{$\dim V(\lambda)=pq$ with $p$ and $q$ prime}}
\[
\begin{array}{|l|c|l|l|}
\hline
pq & \g & \lambda  & \text{Duality} \\ 
\hline\hline
\rm{any} & A_1 &  (pq-1)\lambda_1 & \circ\\
& A_{pq-1} & \lambda_1 & -\\
& C_{pq/2}\ (pq \ \rm{even}) & \lambda_1 & -\\
& B_{(pq-1)/2}\  (pq \ \rm{odd}) & \lambda_1 & +\\
& D_{pq/2}\ (pq \ \rm{even}) & \lambda_1 & +\\

\hline
a(2a+1)
& A_{2a-1}   & 2\lambda_1  & \circ \\ 
& A_{2a}     & \lambda_2   & \circ \\
& B_a (a>2)  & \lambda_2   & +\\
& C_a        & 2 \lambda_1 & +\\
& A_2        & (2a-1)\lambda_1 & \circ\\
& D_{a+1}    & 2 \lambda_1 & +\\
\hline
a(2a-1)
&A_{2a-2} & 2\lambda_1      & \circ\\
&A_{2a-1}(a>2) & \lambda_2       & \circ\\
&A_3 & \lambda_2       & +\\
&A_2      & (2a-2)\lambda_1 & \circ\\
&D_a      & \lambda_2      & +\\
\hline
a(a+2)
&A_a      &  \lambda_1+\lambda_n       & +\\
&A_2      & (a-1)\lambda_1 + \lambda_2 & \circ\\
\hline
a(2a+3)
&B_a   & 2\lambda_1 & +\\
&C_{a+1} & \lambda_2 & +\\
\hline
14
&C_2 & 2\lambda_2 & +\\
&C_3 & \lambda_3 & - \\
&G_2 & \lambda_2 & +\\
\hline
26
&F_4  & \lambda_4 & +\\
\hline
35
&A_3 & 4\lambda_1 & \circ\\
&A_4 & 3\lambda_1 & \circ\\
&A_6 & \lambda_3  &\circ\\
&B_3 & 2\lambda_3 &+\\
&C_2 & 2\lambda_1+\lambda_2 & +\\
&C_2 & 4\lambda_1 & +\\
\hline
55
&C_2 & 4 \lambda_2 & + \\
\hline
77
&B_3  & 3\lambda_1 & +\\
&G_2  & 2\lambda_2 &+\\
&G_2  & 3\lambda_1 &+\\
\hline
91
&C_2 & 5\lambda_2 & +\\
\hline
133
&E_7  & \lambda_1  & +\\
\hline
\end{array}
\]
\end{table}

\newpage


\begin{thebibliography}{9999}

\bibitem{bourbaki}  Nicholas Bourbaki,  
\'{E}l\'ements de math\'ematiques. {F}asc. {XXXIV}.
{G}roupes et alg\`ebres de {L}ie. {C}hapitres {IV}--{VI},
Hermann, Paris, 1968.

\bibitem{fh}  William Fulton and Joe Harris, Representation
Theory, A First Course,  GTM Volume 29, Springer-Verlag,
New York, 1991.

\bibitem{gw} Roe Goodman and Nolan Wallach,  Representations 
and Invariants of the Classical Groups, Encyclopedia of 
Mathematics and its Applications,  Cambridge Univ. Press,
Cambridge, 1999.

\bibitem{jantzen}  Jens Jantzen, Low--dimensional representations
of semisimple groups are semisimple, in:
"{\it Algebraic Groups and Lie Groups}", 255--266, Austral. Math., Soc. Lect. Ser.,
9, Cambridge Univ. Press, Cambridge, 1997. 

\bibitem{katz}  Nicholas Katz,  Exponential Sums and Differential
Equations, Annals of Math. Studies 124,  Princeton Univ.
Press, Princeton, New Jersey, 1999.  
 
\bibitem{katz2} Nicholas Katz, Gauss sums, Kloosterman sums
and monodromy groups, Annals of Math. Studies 116, 
Princeton Univ. Press, Princeton, NJ, 1988. 

\bibitem{katz3} Nicholas Katz,  Convolution and equidistribution:
Sato-Tate theorems for finite-field Mellin transforms,  
Annals of Math. Studies 180, Princeton Univ. Press, Princeton, NJ, 2012. 

\bibitem{lubeck}   Frank L\"ubeck,  Small degree representations
of finite Chevalley groups in defining characteristic,  
LMS J. Comput. Math. 4 (2001), 135--169.  

\bibitem{steinberg} Robert Steinberg, Lectures on Chevalley
Groups,  Yale, 1968.

\end{thebibliography}
\end{document}